\newtheorem{thm}{Theorem}[section]
\newtheorem{lem}[thm]{Lemma}
\newtheorem{prop}[thm]{Proposition}
\theoremstyle{definition}
\newtheorem{defn}[thm]{Definition}
\theoremstyle{remark}
\newtheorem{rem}[thm]{Remark}
\newtheorem{exmpl}[thm]{Example}
\numberwithin{equation}{section}
\newcommand*{\cat}[1]{\mathbf{#1}} 
\newcommand*{\mto}{\rightarrow} 
\newcommand*{\cto}{\rightarrowtail} 
\newcommand*{\qto}{\twoheadrightarrow} 
\newcommand*{\wto}{\xrightarrow{\sim}} 
\newcommand*{\set}[1]{\left\{#1\right\}} 
\newcommand*{\del}{\partial} 
\newcommand*{\hg}{\pi} 
\newcommand*{\comp}{\circ} 
\newcommand*{\Int}{\mathbb{Z}} 
\newcommand*{\Rat}{\mathbb{Q}} 
\newcommand*{\FF}{\mathbb{F}} 
\newcommand*{\cmplx}[1]{#1^\bullet} 
\newcommand*{\tensor}{\otimes} 
\newcommand*{\isomorph}{\cong} 
\newcommand*{\op}{{o\!p}} 
\newcommand*{\cont}{{c\!\!\:o\!n\!t}} 
\newcommand*{\algc}[1]{\overline{#1}} 
\newcommand*{\sheaf}[1]{\mathcal{#1}} 
\newcommand*{\cimm}
{\mathrel{\text{$\hookrightarrow\mkern-12mu\shortmid\mkern+8mu$}}} 
\newcommand*{\oimm}
{\mathrel{\text{$\hookrightarrow\mkern-20mu\circ\mkern+6mu$}}} 
\newcommand{\openideals}{\mathfrak{I}} 
\newcommand{\Sect}{\Gamma} 
\newcommand{\Sectc}{\Gamma\!_c} 
\newcommand{\Kspace}{\mathbb{K}} 
\newcommand{\ringtransf}{\Psi} 
\newcommand{\id}{\mathrm{id}} 
\newcommand{\Frob}{\mathfrak{F}} 
\DeclareMathOperator{\co}{co} 
\DeclareMathOperator{\w}{w} 
\DeclareMathOperator{\HF}{H} 
\DeclareMathOperator{\Jac}{Jac} 
\DeclareMathOperator{\End}{End} 
\DeclareMathOperator{\im}{im} 
\DeclareMathOperator{\Spec}{Spec} 
\DeclareMathOperator{\God}{G} 
\DeclareMathOperator{\RDer}{R} 
\DeclareMathOperator{\Gal}{Gal} 
\DeclareMathOperator{\KTh}{K} 
\DeclareMathOperator{\SKTh}{SK} 
\begin{document}


\title[Noncommutative $L$-functions for varieties over finite fields]{Noncommutative $L$-functions for varieties over finite fields}%
\author{Malte Witte}%

\address{Malte Witte\newline Fakult\"at f\"ur Mathematik\newline Universit\"at Regensburg\newline Universit\"atsstra{\ss}e 31\newline D-93053 Regensburg\newline Germany}%
\email{Malte.Witte@mathematik.uni-regensburg.de}%

\subjclass{14G10 (11G25 14G15)}

\date{\today}%

\begin{abstract}
In this article we prove a Grothendieck trace formula for
$L$-functions of not necessarily commutative adic sheaves.
\end{abstract}

\maketitle
\section{Introduction}

Let $\sheaf{F}$ be an $\ell$-adic sheaf on a separated scheme $X$
over a finite field $\FF$ of characteristic different from $\ell$.
The $L$-function of $\sheaf{F}$ is defined as the product over all
closed points $x$ of $X$ of the characteristic polynomials of the
geometric Frobenius automorphism $\Frob_x$ at $x$ acting on the
stalk $\sheaf{F}_x$:
$$
L(X,\sheaf{F},T)=\prod_{x}\det(1-\Frob_x T^{\deg x}\colon
\sheaf{F}_x)^{-1}.
$$

The Grothendieck trace formula relates the $L$-function to the
action of the geometric Frobenius $\Frob_{\FF}$ on the $\ell$-adic
cohomology groups with proper support over the base change
$\algc{X}$ of $X$ to the algebraic closure:
$$
L(X,\sheaf{F},T)=\prod_{i\in\Int}\det(1-\Frob_{\FF} T\colon
\HF_c^i(\algc{X},\sheaf{F}))^{(-1)^{i+1}}.
$$
It was used by Grothendieck to establish the rationality and the
functional equation of the zeta function of $X$, both of which are
parts of the Weil conjectures.

The Grothendieck trace formula may also be viewed as an equality
between two elements of the first $\KTh$-group of the power series
ring $\Int_{\ell}[[T]]$. Since the ring $\Int_{\ell}[[T]]$ is a
semilocal commutative ring, $\KTh_1(\Int_{\ell}[[T]])$ may be
identified with the group of units $\Int_{\ell}[[T]]^{\times}$ via
the map induced by the determinant. For each closed point $x$ of
$X$, the $\Int_{\ell}[[T]]$-automorphism $1-\Frob_xT$ on
$\Int_{\ell}[[T]]\tensor_{\Int_{\ell}}\sheaf{F}_x$ defines a class
in $\KTh_1(\Int_{\ell}[[T]])$. The product of all these classes
converges in the profinite topology induced on
$\KTh_1(\Int_{\ell}[[T]])$ by the isomorphism
$$
\KTh_1(\Int_{\ell}[[T]])\isomorph\varprojlim_{n}\KTh_1(\Int_{\ell}[[T]]/(\ell^n,T^n)).
$$
The image of the limit under the determinant map agrees with the
inverse of the $L$-function of $\sheaf{F}$. On the other hand, the
$\Int_{\ell}[[T]]$-automorphisms
$$
\Int_{\ell}[[T]]\tensor_{\Int_{\ell}}\HF_c^i(\algc{X},\sheaf{F})\xrightarrow{1-\Frob_{\FF}
T}
\Int_{\ell}[[T]]\tensor_{\Int_{\ell}}\HF_c^i(\algc{X},\sheaf{F})
$$
also give rise to elements in the group
$\KTh_1(\Int_{\ell}[[T]])$. The Grothendieck trace formula may
thus be translated into an equality between the alternating
product of those elements and the class corresponding to the
$L$-function.

In this article, we will show that in the above reformulation of
the Grothendieck trace formula, one may replace $\Int_{\ell}$ by
any adic $\Int_{\ell}$-algebra, i.\,e.\ a compact, semilocal
$\Int_{\ell}$-algebra $\Lambda$ whose Jacobson radical is finitely
generated. These rings play an important role in noncommutative
Iwasawa theory.

A central step in this reformation is the development of a
convenient framework, in which one can put the $\KTh$-theoretic
machinery to use. This was accomplished in \cite{Witte:PhD}, using
the notion of Waldhausen categories. For any adic ring $\Lambda$,
we introduced in \emph{loc.\,cit.\ }a Waldhausen category of
perfect complexes of adic sheaves of $\Lambda$-modules on $X$.
Furthermore, we presented an explicit construction of a Waldhausen
exact functor $\RDer\Sectc(\algc{X},\cmplx{\sheaf{F}})$ that
computes the cohomology with proper support for any perfect
complex $\cmplx{\sheaf{F}}$.

By suitably adapting the classical construction, we define the
$L$-function of such a complex $\cmplx{\sheaf{F}}$ as an element
$L(\cmplx{\sheaf{F}},T)$ of $\KTh_1(\Lambda[[T]])$. The
automorphism $1-\Frob_{\FF} T$ on
$\Lambda[[T]]\tensor_{\Lambda}\RDer\Sectc(\algc{X},\cmplx{\sheaf{F}})$
gives rise to another class in $\KTh_1(\Lambda[[T]])$. Below, we
shall prove the following theorem.

\begin{thm}
Let $\cmplx{\sheaf{F}}$ be a perfect complex of adic sheaves of
$\Lambda$-modules on $X$. Then
$$
L(\cmplx{\sheaf{F}},T)=
\left[\Lambda[[T]]\tensor_{\Lambda}\RDer\Sectc(\algc{X},\cmplx{\sheaf{F}})\xrightarrow{1-\Frob_{\FF}
T}\Lambda[[T]]\tensor_{\Lambda}\RDer\Sectc(\algc{X},\cmplx{\sheaf{F}})\right]^{-1}
$$
in $\KTh_1(\Lambda[[T]])$.
\end{thm}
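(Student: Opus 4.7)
\emph{Proof strategy.} The plan is to follow the classical pattern of Grothendieck's proof of the trace formula, lifted to the Waldhausen $\KTh$-theoretic framework of \cite{Witte:PhD}. Both sides of the asserted identity are viewed as classes in $\KTh_1(\Lambda[[T]])$, and the argument proceeds by dévissage along open--closed decompositions of $X$ in order to reduce to the case where $X=\Spec\FF_q$ is a single closed point of degree $d$ over $\FF$.

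\emph{Well-definedness.} First I would verify that both sides are bona fide classes in $\KTh_1(\Lambda[[T]])$. The right-hand side is the class of the $\Lambda[[T]]$-automorphism $1-\Frob_\FF T$ on the perfect complex $\Lambda[[T]]\tensor_\Lambda\RDer\Sectc(\algc{X},\cmplx{\sheaf{F}})$, relying on the perfectness result of \cite{Witte:PhD}. The left-hand side $L(\cmplx{\sheaf{F}},T)$ is the limit, in the profinite topology on $\KTh_1(\Lambda[[T]])$, of the finite partial products indexed by closed points of $X$. One must check that this net is Cauchy, which follows because for any open ideal $I\subset\Lambda$ and any $n\geq 0$, all but finitely many local factors $[1-\Frob_x T^{\deg x}\colon\cmplx{\sheaf{F}}_{\bar x}]$ become trivial modulo $(I,T^n)$.

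\emph{Dévissage.} For an open--closed decomposition $j\colon U\hookrightarrow X\hookleftarrow Z\colon i$ there is a natural distinguished triangle
$$
\RDer\Sectc(\algc{U},j^*\cmplx{\sheaf{F}})\to\RDer\Sectc(\algc{X},\cmplx{\sheaf{F}})\to\RDer\Sectc(\algc{Z},i^*\cmplx{\sheaf{F}})
$$
compatible with the action of $\Frob_\FF$. Additivity of $\KTh_1$ with respect to this triangle factors the right-hand side into the corresponding contributions for $U$ and $Z$, while the Euler product defining the $L$-function on the left factors as $L(j^*\cmplx{\sheaf{F}},T)\cdot L(i^*\cmplx{\sheaf{F}},T)$ since the closed points of $X$ partition into those of $U$ and those of $Z$. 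Noetherian induction on $\dim X$ thus reduces the theorem to the case $X=\Spec\FF_q$.

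\emph{The base case and the main obstacle.} When $X$ is a single closed point of degree $d$, the complex $\RDer\Sectc(\algc{X},\cmplx{\sheaf{F}})$ is represented by an induced module $M^{\oplus d}$ with $M=\cmplx{\sheaf{F}}_{\bar x}$, on which $\Frob_\FF$ acts by a cyclic permutation of the $d$ summands whose $d$-fold iterate recovers $\Frob_x$ acting on $M$. The identity then reduces to the $\KTh_1(\Lambda[[T]])$-equality
$$
[1-\Frob_\FF T\colon M^{\oplus d}]=[1-\Frob_x T^d\colon M],
$$
which I would establish by an explicit block row/column manipulation over the noncommutative power series ring. The principal obstacle is not this base case but the interaction between the dévissage step and the profinite limit defining $L(\cmplx{\sheaf{F}},T)$: one must show that the additivity isomorphisms induced by distinguished triangles commute with the passage to the profinite completion, and that the resulting K-theoretic identities are stable under this limit. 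A secondary subtlety is that, since $\Lambda$ is genuinely noncommutative, these equalities must be verified at the level of matrices rather than as identities of scalar-valued determinants.
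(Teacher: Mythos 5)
Your dévissage step contains a fatal gap: open--closed decomposition does \emph{not} reduce the theorem to the case of a single closed point. If $X$ is (say) a smooth curve and you remove finitely many closed points, you are left with another curve; iterating this procedure never terminates at a zero-dimensional scheme, since a curve has infinitely many closed points and each open piece still has positive dimension. Noetherian induction along this tower therefore proves nothing beyond the trivial dimension-zero case. Your ``base case'' computation of the Euler factor for a single point (the block-matrix identity $[1-\Frob_\FF T\colon M^{\oplus d}] = [1-\Frob_x T^d\colon M]$) is correct, and corresponds to Lemma~\ref{lem:Formula for Euler factor} in the paper, but it is a preparatory lemma about Euler factors, not the heart of the theorem.

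What is actually needed, and what your proposal omits entirely, is the genuinely nontrivial case of a curve. The paper handles dimension reduction by fibering $X$ over a lower-dimensional base $Y$ and applying proper base change, which reduces everything to $\dim X \leq 1$. For curves one cannot avoid real cohomological input. The paper's strategy (Lemma~\ref{lem:smooth curve case}) is: reduce to $\Lambda = \Int_\ell[G]$ and $\cmplx{\sheaf{F}} = \Int_\ell[G]_X^\sharp$ via the twisting lemma; apply the \emph{classical} commutative Grothendieck trace formula over a maximal order $M$ in a split semisimple algebra $F[G]$ to conclude that the discrepancy $d_F = L \cdot \mathcal{L}^{-1}$ lies in $\SKTh_1(\Int_\ell[\Gal(F/Q)][[T]])$; identify this with $\SKTh_1(\Int_\ell[\Gal(F/Q)])$ via Oliver's structure theory (Proposition~\ref{prop:SKTh_1}); and finally show via a Galois-cohomological vanishing result (Proposition~\ref{prop:vanishing in the limit}, using $\HF^2(\Gal(\algc{Q}/L),\Rat_\ell/\Int_\ell)=0$ for function fields) that $\varprojlim_L \SKTh_1(\Int_\ell[\Gal(L/Q)]) = 0$, forcing $d_F = 0$. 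None of this machinery appears in your proposal, and there is no elementary substitute for it: the issue you flag as the ``main obstacle'' (compatibility of dévissage with profinite limits) is comparatively routine, whereas the actual obstruction --- that $\KTh_1$ of a noncommutative group ring is strictly larger than what the determinant sees, so the classical trace formula does not directly transfer --- is not even mentioned.
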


The rough line of argumentation in the proof is as follows. As in
the proof of the classical Grothendieck trace formula, one may
reduce everything to the case of $X$ being a smooth geometrically
connected curve over the finite field $\FF$. Moreover, one can
replace $\Lambda$ by $\Int_{\ell}[\Gal(L/K)]$, where $L$ is a
Galois extension of the function field $K$ of $X$. By the
classical Grothendieck trace formula, we know that our theorem is
true if we further enlarge $\Int_{\ell}[\Gal(L/K)]$ to the maximal
order $M$ in a split semisimple algebra. The crucial step is then
to show that the kernel of
$$
\KTh_1(\Int_{\ell}[\Gal(L/K)][[T]])\rightarrow \KTh_1(M[[T]])
$$
vanishes in the limit as $L$ tends to the separable closure of
$K$. This is achieved by using results of
\cite{Oliver:WhiteheadGroups} and \cite{FK:CNCIT}.

\emph{Outline.} Sec\-tion~\ref{sec:Waldhausen} recalls briefly
Waldhausen's construction of algebraic $\KTh$-theory. In
Sec\-tion~\ref{sec:adic rings} we introduce a special Waldhausen
category that computes the $\KTh$-theory of an adic ring. A
similar construction is then used in Sec\-tion~\ref{sec:perf
complexes} to define the categories of perfect complexes of adic
sheaves. In Sec\-tion~\ref{sec:on KTh_1} we study the first
$\KTh$-group of $\Int_{\ell}[G][[T]]$ and prove the abovementioned
vanishing result. In Sec\-tion~\ref{sec:L-functions} we define the
$L$-function of a perfect complex of adic sheaves.
Sec\-tion~\ref{sec:Grothendieck trace formula} contains the proof
of the Grothendieck trace formula for these $L$-functions.

\emph{Acknowledgements.} The author would like to thank Annette
Huber and Alexander Schmidt for their encouragement and for
valuable discussions.

\section{Waldhausen Categories}\label{sec:Waldhausen}

Waldhausen \cite{Wal:AlgKTheo} introduced a construction of
algebraic $\KTh$-theory that is both more transparent and more
flexible than Quillen's original approach. He associates
$\KTh$-groups to any category of the following kind.

\begin{defn}\label{defn:Waldhausen cat}
A \emph{Waldhausen category} $\cat W$ is a category with a zero
object~$*$, together with two subcategories $\co(\cat W)$
(\emph{cofibrations}) and $\w(\cat W)$ (\emph{weak equivalences})
subject to the following set of axioms.
\begin{enumerate}
    \item\label{enum:Waldhausen cat.isos} Any isomorphism in $\cat W$ is a morphism in $\co(\cat
    W)$ and $\w(\cat W)$.
    \item\label{enum:Waldhausen cat.0-maps} For every object $A$ in $\cat W$, the unique map $*\mto
    A$ is in $\co(\cat W)$.
    \item\label{enum:Waldhausen cat.pushouts} If $A\mto B$ is a map in $\co(\cat W)$ and $A\mto C$ is a
    map in $\cat W$, then the pushout $B\cup_A C$ exists and the
    canonical map $C\mto B\cup_A C$ is in $\co(\cat W)$.
    \item\label{enum:Waldhausen cat.glueing} If in the commutative diagram
   $$
   \xymatrix{
   B\ar[d]&A\ar[r]_f\ar[l]\ar[d]&C\ar[d]\\
   B'&A'\ar[r]_g\ar[l]&C'
   }
   $$
   the morphisms $f$ and $g$ are cofibrations and the
   downwards pointing arrows are weak equivalences, then the natural
   map $B\cup_A C\mto B'\cup_{A'}C'$ is a weak equivalence.
\end{enumerate}
\end{defn}

We denote maps from $A$ to $B$ in $\co(\cat W)$ by $A\cto B$,
those in $\w(\cat W)$ by $A\wto B$. If $C=B\cup_A *$ is a cokernel
of the cofibration $A\cto B$, then we denote the natural quotient
map from $B$ to $C$ by $B\qto C$. The sequence
$$
A\cto B\qto C
$$
is called \emph{exact sequence} or \emph{cofibre sequence}.

\begin{defn}
A functor between Waldhausen categories is called
\emph{(Waldhausen) exact} if it preserves cofibrations, weak
equivalences, and pushouts along cofibrations.
\end{defn}

If $\cat{W}$ is a Waldhausen category, then Waldhausen's
$S$-construction yields a topological space $\Kspace(\cat{W})$ and
Waldhausen exact functors $F\colon \cat{W}\mto \cat{W'}$ yield
continuous maps $\Kspace(F)\colon \Kspace(\cat{W})\mto
\Kspace(\cat{W'})$ \cite{Wal:AlgKTheo}.

\begin{defn}
The \emph{$n$-th $\KTh$-group of $\cat{W}$} is defined to be the
$n$-th homotopy group of $\Kspace(\cat{W})$:
$$
\KTh_n(\cat{W})=\hg_n(\Kspace(\cat{W})).
$$
\end{defn}

\begin{exmpl}\label{exmpl:Waldhausen categories}
\quad
\begin{enumerate}
\item Any exact category $\cat{E}$ may be viewed as a Waldhausen
category by taking the admissible monomorphisms as cofibrations
and isomorphisms as weak equivalences. Then the Waldhausen
$\KTh$-groups of $\cat{E}$ agree with the Quillen $\KTh$-groups of
$\cat{E}$ \cite[Theorem 1.11.2]{ThTr:HAKTS+DC}.

\item Let $\cat{Kom}^b(\cat{E})$ be the category of bounded
complexes over the exact category $\cat{E}$ with degreewise
admissible monomorphisms as cofibrations and quasi-isomorphisms
(in the category of complexes of an ambient abelian category
$\cat{A}$) as weak equivalences. By the Gillet-Waldhausen theorem
\cite[Theorem 1.11.7]{ThTr:HAKTS+DC}, the Waldhausen $\KTh$-groups
of $\cat{Kom}^b(\cat{E})$ also agree with the $\KTh$-groups of
$\cat{E}$ .

\item\label{enum:Derived equivalences} In fact, Thomason showed
that if $\cat{W}$ is any sufficiently nice Waldhausen category of
complexes and $F\colon \cat{W}\mto \cat{Kom}^b(\cat{E})$ a
Waldhausen exact functor that induces an equivalence of the
derived categories of $\cat{W}$ and $\cat{Kom}^b(\cat{E})$, then
$F$ induces an isomorphism of the corresponding $\KTh$-groups
\cite[Theorem 1.9.8]{ThTr:HAKTS+DC}.
\end{enumerate}
\end{exmpl}

\begin{rem}
In the view of Example~\ref{exmpl:Waldhausen
categories}.(\ref{enum:Derived equivalences}) one might wonder
wether it is possible to define a reasonable $\KTh$-theory for
triangulated categories. However, \cite{Schlichting:NoteOnKTheory}
shows that such a construction fails to exist.
\end{rem}

The zeroth $\KTh$-group of a Waldhausen category can be described
fairly explicitly as follows.

\begin{prop}
Let $\cat{W}$ be a Waldhausen category. The group
$\KTh_0(\cat{W})$ is the abelian group generated by the objects of
$\cat{W}$ modulo the relations
\begin{enumerate}
\item $[A]=[B]$ if there exists a weak equivalence $A\wto B$,

\item $[B]=[A][C]$ if there exists a cofibre sequence $A\cto B\qto
C$.
\end{enumerate}
\end{prop}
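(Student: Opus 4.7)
The plan is to compute $\KTh_0(\cat{W}) = \pi_1(|wS_\bullet\cat{W}|)$ directly by inspecting the 2-skeleton of Waldhausen's bisimplicial set whose $(p,q)$-simplices are chains of $p$ composable weak equivalences in $S_q\cat{W}$. I begin by recalling the low-dimensional data: $S_0\cat{W}$ is the trivial category, $S_1\cat{W}$ is equivalent to $\cat{W}$ via $A \mapsto (* \cto A)$, and an object of $S_2\cat{W}$ amounts to a cofibre sequence $A \cto B \qto C$ whose face maps return $d_0 = C$, $d_1 = B$, $d_2 = A$ in $S_1\cat{W} = \cat{W}$. Consequently $|wS_\bullet\cat{W}|$ has a single $0$-cell, and every non-degenerate $1$-cell comes from an object of $\cat{W}$; each such $A$ thus produces a loop $[A]$, while the degenerate edge at $*$ forces $[*]=1$.

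Next I would enumerate the contributing $2$-cells. Cells of bidegree $(2,0)$ are pairs of composable weak equivalences in $S_0\cat{W}=*$ and contribute no relation. Cells of bidegree $(0,2)$ are objects of $S_2\cat{W}$: the simplicial boundary relation $d_1 = d_2 \cdot d_0$ produces, for each cofibre sequence $A \cto B \qto C$, precisely the claimed identity $[B] = [A][C]$. Cells of bidegree $(1,1)$ are weak equivalences $A \wto B$ in $S_1\cat{W} = \cat{W}$; the associated product cell $\Delta^1 \times \Delta^1$, whose boundary involves the loops $[A]$ and $[B]$ together with degenerate edges in the nerve direction, enforces $[A] = [B]$. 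Higher-dimensional cells contribute only relations among relations and so leave $\pi_1$ unchanged. Commutativity of the resulting operation then follows from the standard fact that $\Kspace(\cat{W})$ is an infinite loop space, or alternatively from the two cofibre sequences $A \cto A \cup_* B \qto B$ and $B \cto A \cup_* B \qto A$ obtained from the pushouts constructed using axioms~(\ref{enum:Waldhausen cat.0-maps}) and~(\ref{enum:Waldhausen cat.pushouts}).

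The principal technical obstacle is the careful bookkeeping of the $(1,1)$-cells: one must correctly read off the boundary of the square $\Delta^1 \times \Delta^1$ in the bisimplicial realization, including its diagonal triangulation into two $2$-simplices, and verify that, modulo the cofibre-sequence relations already produced by the $(0,2)$-cells, the net contribution of a weak equivalence $A \wto B$ is precisely $[A] = [B]$ and nothing stronger. Once this is in hand, matching the resulting group presentation with the claimed one is immediate.
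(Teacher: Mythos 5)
Your proof is correct and follows the standard direct computation of $\pi_1(|wS_\bullet\cat{W}|)$ from the $2$-skeleton of the bisimplicial set, which is exactly the argument behind the reference \cite[\S 1.5.6]{ThTr:HAKTS+DC} that the paper cites as its entire proof. The identification of the relevant low-dimensional bisimplices, the relations they impose (including the correct reading of the $(1,1)$-squares with two degenerate edges), and the derivation of commutativity from the two cofibre sequences built from the coproduct $A\cup_* B$ all match the standard argument.
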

\begin{proof}
See \cite[\S 1.5.6]{ThTr:HAKTS+DC}.
\end{proof}

There also exists a description of $\KTh_1(\cat{W})$ for general
$\cat{W}$ as the kernel of a certain group homomorphism
\cite{MT:1TWKTS}. We shall come back to this description later in
a more specific situation.

\section{The $\KTh$-Theory of Adic Rings}\label{sec:adic rings}

All rings will be associative with unity, but not necessarily
commutative. For any ring $R$, we let
$$
\Jac(R)=\set{x\in R|\text{$1-rx$ is invertible for any $r\in R$}}
$$
denote the \emph{Jacobson radical} of $R$, i.\,e.\ the
intersection of all maximal left ideals. It is the largest
two-sided ideal $I$ of $R$ such that $1+I\subset R^{\times}$
\cite[Chapter~2, $\S4$]{Lam:FirstCourseNoncomRings}. The ring $R$
is called \emph{semilocal} if $R/\Jac(R)$ is artinian.

\begin{defn}
A ring $\Lambda$ is called an \emph{adic ring} if it satisfies any
of the following equivalent conditions:
\begin{enumerate}
\item $\Lambda$ is compact, semilocal and the Jacobson radical is
finitely generated.

\item For each integer $n\geq 1$, the ideal $\Jac(\Lambda)^n$ is
of finite index in $\Lambda$ and
$$
\Lambda=\varprojlim_{n}\Lambda/\Jac(\Lambda)^n.
$$
\item There exists a twosided ideal $I$ such that for each integer
$n\geq 1$, the ideal $I^n$ is of finite index in $\Lambda$ and
$$
\Lambda=\varprojlim_{n}\Lambda/I^n.
$$
\end{enumerate}
\end{defn}

\begin{exmpl}
The following rings are adic rings:
\begin{enumerate}
\item any finite ring,

\item $\Int_{\ell}$,

\item the group ring $\Lambda[G]$ for any finite group $G$ and any
adic ring $\Lambda$,

\item the power series ring $\Lambda[[T]]$ for any adic ring
$\Lambda$ and an indeterminate $T$ that commutes with all elements
of $\Lambda$,

\item the profinite group ring $\Lambda[[G]]$, when $\Lambda$ is a
adic $\Int_{\ell}$-algbra and $G$ is a profinite group whose
$\ell$-Sylow subgroup has finite index in $G$.
\end{enumerate}
Note that adic rings are not noetherian in general, the power
series over $\Int_{\ell}$ in two noncommuting indeterminates being
a counterexample.
\end{exmpl}

We will now examine the $\KTh$-theory of $\Lambda$.

\begin{defn}
Let $R$ be any ring. A complex $\cmplx{M}$ of left $R$-modules is
called \emph{strictly perfect} if it is strictly bounded and for
every $n$, the module $M^n$ is finitely generated and projective.
We let $\cat{SP}(R)$ denote the Waldhausen category of strictly
perfect complexes, with quasi-isomorphisms as weak equivalences
and injective complex morphisms as cofibrations.
\end{defn}

\begin{defn}\label{defn:complexes of bimodules}
Let $R$ and $S$ be two rings. We denote by
$R^{\op}\text{-}\cat{SP}(S)$ the Waldhausen category of complexes
of $S$-$R$-bimodules (with $S$ acting from the left, $R$ acting
from the right) which are strictly perfect as complexes of
$S$-modules. The weak equivalences are given by
quasi-isomorphisms, the cofibrations are the injective complex
morphisms.
\end{defn}

By Example~\ref{exmpl:Waldhausen categories} we know that the
Waldhausen $\KTh$-theory of $\cat{SP}(R)$ coincides with the
Quillen $\KTh$-theory of $R$:
$$\KTh_n(\cat{SP}(R))=\KTh_n(R).$$
For complexes $\cmplx{M}$ and $\cmplx{N}$ of right and left
$R$-modules, respectively, we let
$$
\cmplx{(M\tensor_R N)}
$$
denote
the total complex of the bicomplex $\cmplx{M}\tensor_R \cmplx{N}$.
Any complex $\cmplx{M}$ in $R^{\op}\text{-}\cat{SP}(S)$ clearly
gives rise to a Waldhausen exact functor
$$
\cmplx{(M\tensor_R(-))}\colon \cat{SP}(R)\mto \cat{SP}(S).
$$
and hence, to homomorphisms $ \KTh_n(R)\mto \KTh_n (S)$.

Let now $\Lambda$ be an adic ring. The first algebraic
$\KTh$-group of $\Lambda$ has the following useful property.

\begin{prop}[\cite{FK:CNCIT}, Prop. 1.5.3]\label{prop:K_1 of adic rings}
Let $\Lambda$ be an adic ring. Then
$$
\KTh_1(\Lambda)=\varprojlim_{I\in\openideals_{\Lambda}}\KTh_1(\Lambda/I)
$$
In particular, $\KTh_1(\Lambda)$ is a profinite group.
\end{prop}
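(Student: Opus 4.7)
My plan is to approximate $\KTh_1(\Lambda)$ through the tower of finite quotient rings provided by the adic structure. First I would restrict to the cofinal subsystem $\{I_n\}_{n \geq 1}$ of $\openideals_{\Lambda}$, where $I_n := \Jac(\Lambda)^n$: any open ideal $I$ gives $\Lambda/I$ finite and semilocal, so $\Jac(\Lambda/I)$ is nilpotent and $\Jac(\Lambda)^n \subseteq I$ for some $n$. Thus it suffices to prove $\KTh_1(\Lambda) \isomorph \varprojlim_n \KTh_1(\Lambda/I_n)$, and the relevant feature of the transitions $\Lambda/I_{n+1} \qto \Lambda/I_n$ is that their kernels $I_n/I_{n+1}$ lie in $\Jac(\Lambda/I_{n+1})$.

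Consequently a matrix in $\mathrm{M}_r(\Lambda/I_{n+1})$ is invertible if and only if its reduction in $\mathrm{M}_r(\Lambda/I_n)$ is. This yields both the surjectivity of $\mathrm{GL}_r(\Lambda/I_{n+1}) \qto \mathrm{GL}_r(\Lambda/I_n)$ and the identification $\mathrm{GL}_r(\Lambda) = \varprojlim_n \mathrm{GL}_r(\Lambda/I_n)$, for every $r$. The analogous surjectivity for the elementary subgroups $E_r$ is immediate, since elementary matrices lift trivially along a surjective ring map. Applying Mittag-Leffler to the compatible short exact sequences
$$
1 \mto E_r(\Lambda/I_n) \mto \mathrm{GL}_r(\Lambda/I_n) \mto \mathrm{GL}_r(\Lambda/I_n)/E_r(\Lambda/I_n) \mto 1
$$
then gives an isomorphism $\varprojlim_n \mathrm{GL}_r(\Lambda/I_n)/E_r(\Lambda/I_n) \isomorph \mathrm{GL}_r(\Lambda)/E_r(\Lambda)$.

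To conclude I would invoke Vaserstein's stability theorem: each $\Lambda/I_n$ is Artinian and $\Lambda$ is complete with respect to its Jacobson radical, so all rings in sight are semilocal of stable rank $1$, and $\KTh_1(R) = \mathrm{GL}_r(R)/E_r(R)$ holds for any $r \geq 2$ uniformly in $R \in \{\Lambda\} \cup \{\Lambda/I_n\}_n$. Inserting such an $r$ into the preceding isomorphism yields $\KTh_1(\Lambda) \isomorph \varprojlim_n \KTh_1(\Lambda/I_n)$, and profiniteness is automatic since each $\Lambda/I_n$ is a finite ring, making each $\KTh_1(\Lambda/I_n)$ finite. The chief obstacle is precisely the interaction of $\varprojlim_n$ with the colimit $\varinjlim_r$ in the standard definition $\KTh_1 = \varinjlim_r \mathrm{GL}_r/E_r$; Vaserstein stability circumvents it by furnishing a uniform $r$ at which the quotients already compute $\KTh_1$.
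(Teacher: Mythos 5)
Note first that the paper does not actually prove this proposition: it is imported, with a citation, from Proposition~1.5.3 of \cite{FK:CNCIT}. So there is no argument in the paper to compare yours against, and your proposal has to be judged on its own merits.

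Your reductions are fine: passing to the cofinal system $I_n=\Jac(\Lambda)^n$, observing that $I_n/I_{n+1}\subseteq\Jac(\Lambda/I_{n+1})$, deducing surjectivity of $GL_r(\Lambda/I_{n+1})\to GL_r(\Lambda/I_n)$ and of $E_r(\Lambda/I_{n+1})\to E_r(\Lambda/I_n)$, identifying $GL_r(\Lambda)=\varprojlim_n GL_r(\Lambda/I_n)$, and invoking stability for semilocal rings to fix a uniform $r$ are all legitimate steps. The gap is in the Mittag--Leffler step. With $A_n=E_r(\Lambda/I_n)$, $B_n=GL_r(\Lambda/I_n)$, $C_n=B_n/A_n$ and surjective transitions on the $A_n$, Mittag--Leffler yields
$$
\varprojlim_n GL_r(\Lambda/I_n)/E_r(\Lambda/I_n) \;\isomorph\; GL_r(\Lambda)\big/\varprojlim_n E_r(\Lambda/I_n),
$$
not $GL_r(\Lambda)/E_r(\Lambda)$. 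The group $\varprojlim_n E_r(\Lambda/I_n)$ is the \emph{closure} of $E_r(\Lambda)$ in the adic topology on $GL_r(\Lambda)$: the inclusion $E_r(\Lambda)\subseteq\varprojlim_n E_r(\Lambda/I_n)$ is clear, but a compatible system $(e_n)_n$ with $e_n\in E_r(\Lambda/I_n)$ may require longer and longer elementary words as $n$ grows, and there is no a~priori reason its limit in $GL_r(\Lambda)$ is a \emph{finite} product of elementary matrices over $\Lambda$. In other words, you must also show that $E_r(\Lambda)$ is closed. That is true, but it is a nontrivial extra input; one way to get it is a bounded-generation statement for $E_r$ over rings of stable rank one (a bound, depending only on $r$, on the number of elementary factors needed), which exhibits $E_r(\Lambda)$ as the continuous image of a compact set and hence closed. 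As written, your argument only identifies $\varprojlim_n\KTh_1(\Lambda/I_n)$ with the quotient of $GL_r(\Lambda)$ by the closure of $E_r(\Lambda)$; the final identification with $\KTh_1(\Lambda)=GL_r(\Lambda)/E_r(\Lambda)$ is exactly what is missing and needs a reference or a proof.
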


It will be convenient to introduce another Waldhausen category
that computes the $\KTh$-theory of $\Lambda$.

\begin{defn}
Let $R$ be any ring. A complex $\cmplx{M}$ of left $R$-modules is
called \emph{$DG$-flat} if every module $M^n$ is flat and for
every acyclic complex $\cmplx{N}$ of right $R$-modules, the
complex $\cmplx{(N\tensor_R M)}$ is acyclic.
\end{defn}

We shall denote the lattice of open ideals of an adic ring
$\Lambda$ by $\openideals_{\Lambda}$.

\begin{defn}\label{defn:PDG(Lambda)}
Let $\Lambda$ be an adic ring. We denote by
$\cat{PDG}^{\cont}(\Lambda)$ the following Waldhausen category.
The objects of $\cat{PDG}^{\cont}(\Lambda)$ are inverse system
$(\cmplx{P}_I)_{I\in \openideals_{\Lambda}}$ satifying the
following conditions:
\begin{enumerate}
\item for each $I\in\openideals_{\Lambda}$, $\cmplx{P}_I$ is a
$DG$-flat complex of left $\Lambda/I$-modules and \emph{perfect},
i.\,e.\ quasi-isomorphic to a complex in $\cat{SP}(\Lambda)$,

\item for each $I\subset J\in\openideals_{\Lambda}$, the
transition morphism of the system
$$
\varphi_{IJ}:\cmplx{P}_I\mto \cmplx{P}_J
$$
induces an isomorphism
$$
\Lambda/J\tensor_{\Lambda/I}\cmplx{P}_I\isomorph \cmplx{P}_J.
$$
\end{enumerate}
A morphism of inverse systems $(f_I\colon \cmplx{P}_I\mto
\cmplx{Q}_I)_{I\in\openideals_{\Lambda}}$ in
$\cat{PDG}^{\cont}(\Lambda)$ is a weak equivalence if every $f_I$
is a quasi-isomorphism. It is a cofibration if every $f_I$ is
injective.
\end{defn}

The following proposition is an easy consequence of Waldhausen's
approximation theorem.

\begin{prop}\label{prop:embedding SP(Lambda) in PDG(Lambda)}
The Waldhausen exact functor
$$
\cat{SP}(\Lambda)\mto\cat{PDG}^{\cont}(\Lambda),\qquad
\cmplx{P}\mto
(\Lambda/I\tensor_{\Lambda}\cmplx{P})_{I\in\openideals_{\Lambda}}
$$
identifies $\cat{SP}(\Lambda)$ with a full Waldhausen subcategory
of $\cat{PDG}^{\cont}(\Lambda)$. Moreover, it induces isomorphisms
$$
\KTh_n(\cat{SP}(\Lambda))\isomorph\KTh_n(\cat{PDG}^{\cont}(\Lambda)).
$$
\end{prop}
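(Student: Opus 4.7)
The plan is to apply Waldhausen's approximation theorem to the functor $F\colon \cat{SP}(\Lambda)\mto \cat{PDG}^{\cont}(\Lambda)$ sending $\cmplx{P}$ to $(\Lambda/I\tensor_\Lambda\cmplx{P})_{I\in\openideals_\Lambda}$. First, I would verify routinely that $F$ is well-defined and Waldhausen exact: each $\Lambda/I\tensor_\Lambda\cmplx{P}$ is a bounded complex of finitely generated projective $\Lambda/I$-modules, hence $DG$-flat and perfect in the required sense, and the transition maps become isomorphisms after applying $\Lambda/J\tensor_{\Lambda/I}(-)$ by associativity of the tensor product. Preservation of cofibrations, weak equivalences, and pushouts along cofibrations is immediate because $\Lambda/I\tensor_\Lambda-$ is exact on $\cat{SP}(\Lambda)$. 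The identification of $\cat{SP}(\Lambda)$ with a full Waldhausen subcategory then reduces to the identity $\Hom_\Lambda(P,Q)=\varprojlim_I\Hom_{\Lambda/I}(P/IP,Q/IQ)$ for finitely generated projective $\Lambda$-modules $P,Q$, which follows from $\Lambda=\varprojlim\Lambda/I$ after reducing to the case of free modules.

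For the $\KTh$-theoretic isomorphism I would check the two hypotheses of the approximation theorem. The first is that a morphism $f\colon\cmplx{P}\mto\cmplx{Q}$ in $\cat{SP}(\Lambda)$ is a quasi-isomorphism if and only if $F(f)$ is a weak equivalence in $\cat{PDG}^{\cont}(\Lambda)$. The forward direction is clear. For the converse, the cone $\Cone(f)$ is a strictly perfect complex with $\Lambda/I\tensor_\Lambda\Cone(f)$ acyclic for every open $I$; taking $I=\Jac(\Lambda)$ and applying Nakayama's lemma to the cohomology modules, using the bounded and degreewise finitely generated projective structure, forces $\Cone(f)$ itself to be acyclic.

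The substantial step is the second hypothesis: given $\cmplx{P}\in\cat{SP}(\Lambda)$ and a morphism $F(\cmplx{P})\mto(\cmplx{Q}_I)$ in $\cat{PDG}^{\cont}(\Lambda)$, one needs a factorization $\cmplx{P}\mto\cmplx{R}$ in $\cat{SP}(\Lambda)$ together with a weak equivalence $F(\cmplx{R})\wto(\cmplx{Q}_I)$. The existence of arbitrary strictly perfect approximations reduces the factorization statement to the mapping cylinder construction inside $\cat{SP}(\Lambda)$, so the core task is: given any $(\cmplx{Q}_I)$, produce $\cmplx{R}\in\cat{SP}(\Lambda)$ and compatible quasi-isomorphisms $\Lambda/I\tensor_\Lambda\cmplx{R}\wto\cmplx{Q}_I$. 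This is the main obstacle. I would restrict to the cofinal tower $I_n=\Jac(\Lambda)^n$ and build $\cmplx{R}$ by induction on $n$: at $n=1$, use that $\Lambda/I_1$ is semisimple artinian so that $\cmplx{Q}_{I_1}$ is quasi-isomorphic to an honestly strictly perfect complex; at the step from $n$ to $n+1$, lift each finitely generated projective module across the nilpotent surjection $\Lambda/I_{n+1}\qto\Lambda/I_n$ by the standard idempotent-lifting argument, lift the differential, and extend the comparison quasi-isomorphism, with obstructions vanishing because the complexes are bounded and degreewise projective. Taking the inverse limit over $n$ produces the desired $\cmplx{R}$ in $\cat{SP}(\Lambda)=\varprojlim_n \cat{SP}(\Lambda/I_n)$. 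With both hypotheses verified, Waldhausen's approximation theorem yields the isomorphism $\KTh_n(\cat{SP}(\Lambda))\isomorph\KTh_n(\cat{PDG}^{\cont}(\Lambda))$ for all $n\geq 0$.
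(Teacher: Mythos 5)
Your overall strategy — realizing the functor as a Waldhausen exact embedding and applying Waldhausen's approximation theorem — matches the paper's approach (the text directly preceding the proposition states that it is "an easy consequence of Waldhausen's approximation theorem," with the details deferred to the author's thesis). The verification of full faithfulness via $\Hom_\Lambda(P,Q)=\varprojlim_I\Hom_{\Lambda/I}(P/IP,Q/IQ)$ is fine, as is the broad outline of the two approximation hypotheses. However, there are two places where your sketch contains more than rhetorical gaps.

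First, a minor one: in verifying that $F$ reflects weak equivalences you propose to "apply Nakayama's lemma to the cohomology modules" of $\Cone(f)$. Since $\Lambda$ is not assumed Noetherian, the cohomology modules of a strictly perfect complex need not be finitely generated, so Nakayama cannot be applied to them directly. The correct argument applies Nakayama to the cokernel of the top nonzero differential (a quotient of a finitely generated module, hence finitely generated), concludes surjectivity, splits off a contractible piece using projectivity of the top term, and inducts downward. Same conclusion, but the step as you wrote it is not valid.

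Second, and more substantially: in the inductive step from $\Lambda/I_n$ to $\Lambda/I_{n+1}$ you claim that one can "lift the differential, and extend the comparison quasi-isomorphism, with obstructions vanishing because the complexes are bounded and degreewise projective." This is not correct. After lifting the graded projective modules along the square-zero extension $\Lambda/I_{n+1}\qto\Lambda/I_n$, one can lift the differential termwise by projectivity, but the obstruction to arranging $d^2=0$ is a genuine degree-two class in the cohomology of $\Hom^\bullet_{\Lambda/I_n}(\cmplx{R}_n,(I_n/I_{n+1})\otimes\cmplx{R}_n)$; since $I_n/I_{n+1}$ is a $\Lambda/\Jac(\Lambda)$-module, this group is of the form $\prod_k\Hom_{\Lambda/\Jac(\Lambda)}\bigl(\overline{R_n^k},(I_n/I_{n+1})\otimes\overline{R_n^{k+2}}\bigr)$ for a minimal model, which is nonzero in general. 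Boundedness and degreewise projectivity do not make it vanish. What actually makes the construction go through is that $\cmplx{Q}_{I_{n+1}}$ already provides a lift in the derived category of $\Lambda/I_{n+1}$ whose reduction to $\Lambda/I_n$ recovers $\cmplx{Q}_{I_n}$; one must then show this derived lift can be promoted to a strictly perfect complex reducing \emph{exactly} (not merely quasi-isomorphically) to $\cmplx{R}_n$. The standard ways to do this — pass to minimal strictly perfect models over the semiperfect rings $\Lambda/I_n$ and invoke their uniqueness, or alternatively add contractible summands to both sides and propagate the modification back down the tower — are the real content of the proof, and your sketch omits them entirely.
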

\begin{proof}
See \cite[Proposition~5.2.5]{Witte:PhD}.
\end{proof}

We will now extend the definition of the tensor product to
$\cat{PDG}^{\cont}(\Lambda)$.

\begin{defn}\label{defn:change of ring functor}
For $(\cmplx{P}_I)_{I\in\openideals_{\Lambda}}\in
\cat{PDG}^{\cont}(\Lambda)$ and $\cmplx{M}\in
\Lambda^{\op}\text{-}\cat{SP}(\Lambda')$ we set
$$
\ringtransf_{M}\left((\cmplx{P}_I)_{I\in\openideals_{\Lambda}}\right)=
(\varprojlim_{J\in\openideals_{\Lambda}}
\Lambda'/I\tensor_{\Lambda'}\cmplx{(M\tensor_{\Lambda}P_{J})})_{I\in\openideals_{\Lambda'}}
$$
and obtain a Waldhausen exact functor
$$
\ringtransf_{M}\colon
\cat{PDG}^{\cont}(\Lambda)\mto\cat{PDG}^{\cont}(\Lambda').
$$
\end{defn}

\begin{prop}
Let $\cmplx{M}$ be a complex in
$\Lambda^{\op}\text{-}\cat{SP}(\Lambda')$. Then the following
diagram commutes.
$$
\xymatrix{
\KTh_n(\cat{SP}(\Lambda))\ar[r]^{\isomorph}\ar[d]^{\KTh_n(\cmplx{M}\tensor_{\Lambda}(-))}&
\KTh_n(\cat{PDG}^{\cont}(\Lambda))\ar[d]^{\KTh_n(\ringtransf_{M})}\\
\KTh_n(\cat{SP}(\Lambda'))\ar[r]^{\isomorph}&
\KTh_n(\cat{PDG}^{\cont}(\Lambda')) }
$$
\end{prop}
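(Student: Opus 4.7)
The plan is to produce a natural weak equivalence between the two composite Waldhausen exact functors $\cat{SP}(\Lambda)\to\cat{PDG}^{\cont}(\Lambda')$ obtained by traversing the diagram in the two possible ways. Since Waldhausen exact functors related by a natural weak equivalence induce the same map on $\KTh$-groups, this, combined with Proposition~\ref{prop:embedding SP(Lambda) in PDG(Lambda)}, gives the desired commutativity.

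For a strictly perfect complex $\cmplx{P}$ over $\Lambda$, the quotient maps $\cmplx{P}\qto\Lambda/J\tensor_{\Lambda}\cmplx{P}$ induce for each $I\in\openideals_{\Lambda'}$ a morphism
$$
\Lambda'/I\tensor_{\Lambda'}\cmplx{(M\tensor_{\Lambda}P)}\to\Lambda'/I\tensor_{\Lambda'}\cmplx{(M\tensor_{\Lambda}(\Lambda/J\tensor_{\Lambda}P))}
$$
that is compatible with the transition maps of the inverse system in $J$. Passing to the limit in $J$ yields the candidate natural transformation from the down-then-right composite to the right-then-down composite. It suffices to check that it is componentwise an isomorphism of complexes of $\Lambda'/I$-modules; then it is in particular a weak equivalence in $\cat{PDG}^{\cont}(\Lambda')$.

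Unwinding the definitions in bidegree $(k,n)$, the map is surjective by right-exactness of tensor products, and its kernel is the image of $\Lambda'/I\tensor_{\Lambda'}(M^kJ)\tensor_{\Lambda}P^n$ in $\Lambda'/I\tensor_{\Lambda'}M^k\tensor_{\Lambda}P^n$. Because $P^n$ is a direct summand of a finitely generated free $\Lambda$-module, this image vanishes as soon as the image of $\Lambda'/I\tensor_{\Lambda'}M^kJ$ in $M^k/IM^k$ vanishes, i.e.\ as soon as $M^kJ\subset IM^k$. Everything therefore reduces to the claim that for every $I\in\openideals_{\Lambda'}$ and every $k$ there exists an open ideal $J_0\subset\Lambda$ with $M^kJ_0\subset IM^k$.

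The main obstacle, and the only nontrivial content, is this continuity assertion for the right $\Lambda$-action on the finite set $M^k/IM^k$. Since $M^k$ is finitely generated over $\Lambda'$ and $\Lambda'/I$ is finite, $M^k/IM^k$ is finite, so the right $\Lambda$-action yields a ring homomorphism $\Lambda\to\End_{\Int}(M^k/IM^k)$ whose kernel $K$ has finite index. The quotient $\Lambda/K$ is artinian, hence $\Jac(\Lambda/K)^n=0$ for some $n$, and since $\Jac(\Lambda)$ maps into $\Jac(\Lambda/K)$ one obtains $\Jac(\Lambda)^n\subset K$. Because $\Jac(\Lambda)^n$ is open by definition of an adic ring, so is $K$; this provides the required $J_0$ and completes the plan.
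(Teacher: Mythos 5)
Your proof is correct and follows the same line as the paper's: the paper simply asserts the canonical isomorphism between the two composites, whereas you spell out why it is one. The nontrivial point you isolate — that for each open $I\subset\Lambda'$ and each $k$ there is an open $J_0\subset\Lambda$ with $M^kJ_0\subset IM^k$, proved via the finiteness of $M^k/IM^k$ and nilpotence of $\Jac(\Lambda/K)$ — is exactly the continuity fact implicit in the paper's phrase ``canonical isomorphism,'' and your deduction from it (eventual constancy of the inverse system in $J$, hence a componentwise isomorphism and in particular a natural weak equivalence) is sound.
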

\begin{proof}
Let $\cmplx{P}$ be a strictly perfect complex in
$\cat{SP}(\Lambda)$. There exists a canonical isomorphism
$$
\left(\Lambda'/I\tensor_{\Lambda'}\cmplx{(M\tensor_{\Lambda}
P)}\right)_{I\in\openideals_{\Lambda'}}\isomorph
(\varprojlim_{J\in\openideals_{\Lambda}}
\Lambda'/I\tensor_{\Lambda'}\cmplx{(M\tensor_{\Lambda}\Lambda/J\tensor_{\Lambda}P)})_{I\in\openideals_{\Lambda'}}.
$$
\end{proof}

From \cite{MT:1TWKTS} we deduce the following properties of the
group $\KTh_1(\Lambda)$.

\begin{prop}\label{prop:presentation of K_1}
The group $\KTh_1(\Lambda)$ is  generated by the weak
autoequivalences $(f_I\colon \cmplx{P}_I\wto
\cmplx{P}_I)_{I\in\openideals_{\Lambda}}$ in
$\cat{PDG}^{\cont}(\Lambda)$. Moreover, we have the following
relations:
\begin{enumerate}
\item $[(f_I\colon\cmplx{P}_I\wto
\cmplx{P}_I)_{I\in\openideals_{\Lambda}}]=[(g_I\colon\cmplx{P}_I\wto
\cmplx{P}_I)_{I\in\openideals_{\Lambda}}][(h_I\colon\cmplx{P}_I\wto
\cmplx{P}_I)_{I\in\openideals_{\Lambda}}]$ if for each
$I\in\openideals_{\Lambda}$, one has $f_I=g_I\comp h_I$,

\item $[(f_I\colon\cmplx{P}_I\wto
\cmplx{P}_I)_{I\in\openideals_{\Lambda}}]=[(g_I\colon\cmplx{Q}_I\wto
\cmplx{Q}_I)_{I\in\openideals_{\Lambda}}]$ if for each
$I\in\openideals_{\Lambda}$, there exists a quasi-isomorphism
$a_I\colon \cmplx{P}_I\wto \cmplx{Q}_I$ such that the square
$$
\xymatrix{\cmplx{P}_I\ar[r]^{f_I}\ar[d]^{a_I}&\cmplx{P}_I\ar[d]^{a_I}\\
\cmplx{Q}_I\ar[r]^{g_I}&\cmplx{Q}_I}
$$
commutes up to homotopy,

\item $[(g_I\colon\cmplx{P'}_I\wto
\cmplx{P'}_I)_{I\in\openideals_{\Lambda}}]=[(f_I\colon\cmplx{P}_I\wto
\cmplx{P}_I)_{I\in\openideals_{\Lambda}}][(h_I\colon\cmplx{P''}_I\wto
\cmplx{P''}_I)_{I\in\openideals_{\Lambda}}]$ if for each
$I\in\openideals_{\Lambda}$, there exists an exact sequence $P\cto
P'\qto P''$ such that the diagram
$$
\xymatrix{\cmplx{P}_I\ar[d]^{f_I}\ar@{>->}[r]&\cmplx{P'}_I\ar[d]^{g_I}\ar@{>>}[r]&\cmplx{P''}\ar[d]^{h_I}\\
\cmplx{P}_I\ar@{>->}[r]&\cmplx{P'}_I\ar@{>>}[r]&\cmplx{P''}}
$$
commutes in the strict sense.
\end{enumerate}
\end{prop}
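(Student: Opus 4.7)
The plan is to deduce this presentation from the general description of $\KTh_1$ of a Waldhausen category given in \cite{MT:1TWKTS}, combined with the identification $\KTh_1(\cat{SP}(\Lambda)) \isomorph \KTh_1(\cat{PDG}^{\cont}(\Lambda))$ provided by Proposition \ref{prop:embedding SP(Lambda) in PDG(Lambda)}. Muro and Tonks realize $\KTh_1(\cat{W})$ as the kernel of an explicit group homomorphism whose generators are pairs $(A, f)$ consisting of an object $A$ together with a weak autoequivalence $f \colon A \wto A$, modulo three families of relations: multiplicativity under composition (giving (1)); a conjugation relation for strictly commuting squares with vertical weak equivalences (a strict version of (2)); and additivity along cofibre sequences (giving (3)). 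Applied to $\cat{W} = \cat{PDG}^{\cont}(\Lambda)$ and transported through Proposition \ref{prop:embedding SP(Lambda) in PDG(Lambda)}, this already yields the correct set of generators and two of the three relations.

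The nontrivial step is to upgrade the strict commutativity in the Muro-Tonks conjugation relation to the ``commutativity up to homotopy'' stated in (2). For this I would argue that if $a_I \colon \cmplx{P}_I \wto \cmplx{Q}_I$ intertwines $f_I$ and $g_I$ only up to a chain homotopy $h_I$, one can factor $a_I$ through the mapping cylinder $\Cyl(a_I)$, producing a diagram in which the autoequivalence induced by $f_I$ and $g_I$ on the cylinder is connected to both $f_I$ and $g_I$ through \emph{strictly} commuting squares of quasi-isomorphisms. Relations (a) and (c) of Muro-Tonks, applied to the canonical cofibre sequences attached to the cylinder, then force the resulting classes in $\KTh_1$ to agree, yielding $[f_I] = [g_I]$.

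Since weak equivalences and cofibrations in $\cat{PDG}^{\cont}(\Lambda)$ are defined levelwise in $I \in \openideals_{\Lambda}$, the cylinder construction is performed component-wise. The main obstacle I anticipate is ensuring that it is carried out coherently enough in the inverse system: concretely, that the levelwise cylinders $\Cyl(a_I)$, together with the transition maps induced by the $\varphi_{IJ}$ of $\cmplx{P}_I$ and $\cmplx{Q}_I$, again define an object of $\cat{PDG}^{\cont}(\Lambda)$. This amounts to showing that the isomorphism $\Lambda/J \tensor_{\Lambda/I} \Cyl(a_I) \isomorph \Cyl(a_J)$ required by condition (2) of Definition \ref{defn:PDG(Lambda)} holds, which follows from the $DG$-flatness of the components together with the fact that the mapping cylinder commutes with tensor products with a ring quotient. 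Once this is established, the three Muro-Tonks relations translate verbatim into (1)--(3).
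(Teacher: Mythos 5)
Your overall strategy---invoke the Muro--Tonks description of $\KTh_1$ of a Waldhausen category, read off relations (1) and (3) from the presentation, and then upgrade the strict conjugation relation to the homotopy-commutative version (2) by a mapping-cylinder trick---is essentially the approach of the paper, which likewise reduces to \cite{MT:1TWKTS} and invokes an auxiliary lemma for relation (2). The cylinder trick itself is sound: one does obtain a weak autoequivalence $\tilde f$ on $\Cyl(a)$ restricting strictly to $f$ along the cofibration $P\cto\Cyl(a)$ and to $g$ along the inclusion $Q\cto\Cyl(a)$, both of which are weak equivalences, so the Muro--Tonks conjugation relation gives $[f]=[\tilde f]=[g]$.

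However, the final paragraph contains a genuine gap. You correctly identify the obstacle that the levelwise cylinders $\Cyl(a_I)$ must assemble into an object of $\cat{PDG}^{\cont}(\Lambda)$, but your proposed fix does not work. The hypothesis of relation (2) only asserts that \emph{for each} $I$ there exists \emph{some} quasi-isomorphism $a_I$ making the square commute up to \emph{some} homotopy $h_I$; it does not assert that the family $(a_I)_I$ (let alone the family of homotopies) is a morphism of inverse systems. Consequently, there are no transition maps $\Cyl(a_I)\mto\Cyl(a_J)$ to speak of, and the required isomorphism $\Lambda/J\tensor_{\Lambda/I}\Cyl(a_I)\isomorph\Cyl(a_J)$ has no reason to hold---DG-flatness of the components is beside the point. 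The correct repair is the one the paper uses for relations (1) and (3) as well: invoke Proposition~\ref{prop:K_1 of adic rings}, which identifies $\KTh_1(\Lambda)\isomorph\varprojlim_I\KTh_1(\Lambda/I)$. It then suffices to prove $[f_I]=[g_I]$ in $\KTh_1(\Lambda/I)\isomorph\KTh_1(\cat{PDG}(X,\Lambda/I))$ separately for each open ideal $I$, where the cylinder $\Cyl(a_I)$ lives entirely inside $\cat{PDG}(\Lambda/I)$ and no coherence across $I$ is needed. You should also note that the generation claim in the statement does not follow from Muro--Tonks alone: the Muro--Tonks generators of the ambient group are more general than weak autoequivalences, and one has to combine the kernel description with the classical presentation of $\KTh_1(\Lambda)$ by automorphisms of finitely generated projective modules, transported through Proposition~\ref{prop:embedding SP(Lambda) in PDG(Lambda)}.
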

\begin{proof}
The description of $\KTh_1(\cat{PDG}^{\cont}(\Lambda))$ as the
kernel of
$$
\mathcal{D}_1\cat{PDG}^{\cont}(\Lambda)\xrightarrow{\del}\mathcal{D}_0\cat{PDG}^{\cont}(\Lambda)
$$
given in \cite{MT:1TWKTS} shows that the weak autoequivalences are
indeed elements of $\KTh_1(\cat{PDG}^{\cont}(\Lambda))$. Together
with Pro\-po\-si\-tion~\ref{prop:K_1 of adic rings} this
description also implies that relations (1) and (3) are satisfied.
For relation (2), one can use \cite[Lemma 3.1.6]{Witte:PhD}.
Finally, the classical description of $\KTh_1(\Lambda)$ implies
that $\KTh_1(\cat{PDG}^{\cont}(\Lambda))$ is already generated by
isomorphisms of finitely generated, projective modules viewed as
strictly perfect complexes concentrated in degree $0$.
\end{proof}

\begin{rem}
Despite the relatively explicit description of $\KTh_1(\cat{W})$
for a Waldhausen category $\cat{W}$ in \cite{MT:1TWKTS} it is not
an easy task to deduce from it a presentation of $\KTh_1(\cat{W})$
as an abelian group. We refer to \cite{MT:OnK1WaldCat} for a
partial result in this direction.

In particular, one should not expect that the relations (1)--(3)
describe the group $\KTh_1(\cat{PDG}^{\cont}(\Lambda))$
completely. However, they will suffice for the purpose of this
paper.
\end{rem}

\section{Perfect Complexes of Adic Sheaves}\label{sec:perf
complexes}

We let $\FF$ denote a finite field of characteristic $p$, with
$q=p^{\nu}$ elements. Furthermore, we fix an algebraic closure
$\algc{\FF}$ of $\FF$.

For any scheme $X$ in the category $\cat{Sch}_{\FF}^{sep}$ of
separated $\FF$-schemes of finite type and any adic ring $\Lambda$
we introduced in \cite{Witte:PhD} a Waldhausen category
$\cat{PDG}^{\cont}(X,\Lambda)$ of perfect complexes of adic
sheaves on $X$. Below, we will recall the definition.

\begin{defn}
Let $R$ be a finite ring and $X$ be a scheme in
$\cat{Sch}_{\FF}^{sep}$. A complex $\cmplx{\sheaf{F}}$ of \'etale
sheaves of left $R$-modules on $X$ is called \emph{strictly
perfect} if it is strictly bounded and each $\sheaf{F}^n$ is
constructible and flat. A complex is called \emph{perfect} if it
is quasi-isomorphic to a strictly perfect complex. It is
\emph{$DG$-flat} if for each geometric point of $X$, the complex
of stalks is $DG$-flat.
\end{defn}

\begin{defn}\label{defn:PDG(X,R)}
We will denote by $\cat{PDG}(X,R)$ the \emph{category of $DG$-flat
perfect complexes of $R$-modules} on $X$. It is a Waldhausen
category with quasi-isomorphisms as weak equivalences and
injective complex morphisms as cofibrations.
\end{defn}

\begin{defn}\label{defn:PDGcont(X,Lambda)}
Let $X$ be a scheme in $\cat{Sch}_{\FF}$ and let $\Lambda$ be an
adic ring. The \emph{category of perfect complexes of adic
sheaves} $\cat{PDG}^{\cont}(X,\Lambda)$ is the following
Waldhausen category. The objects of $\cat{PDG}^{\cont}(X,\Lambda)$
are inverse system $(\cmplx{\sheaf{F}}_I)_{I\in
\openideals_{\Lambda}}$ such that:
\begin{enumerate}
\item for each $I\in\openideals_{\Lambda}$, $\cmplx{\sheaf{F}}_I$
is in $\cat{PDG}(X,\Lambda/I)$,

\item for each $I\subset J\in\openideals_{\Lambda}$, the
transition morphism
$$
\varphi_{IJ}:\cmplx{\sheaf{F}}_I\mto \cmplx{\sheaf{F}}_J
$$
of the system induces an isomorphism
$$
\Lambda/J\tensor_{\Lambda/I}\cmplx{\sheaf{F}}_I\wto
\cmplx{\sheaf{F}}_J.
$$
\end{enumerate}
Weak equivalences and cofibrations are those morphisms of inverse
systems that are weak equivalences or cofibrations for each
$I\in\openideals_{\Lambda}$, respectively.
\end{defn}

\begin{rem}\label{rem:PDGcont=PDG}
If $\Lambda$ is a finite ring, the zero ideal is open and hence,
an element in $\openideals_{\Lambda}$. In particular, the
following Waldhausen exact functors are mutually inverse
equivalences for finite rings $\Lambda$:
\begin{align*}
\cat{PDG}^{\cont}(X,\Lambda)\mto \cat{PDG}(X,\Lambda),&\qquad
(\cmplx{\sheaf{F}}_I)_{I\in\openideals_{\Lambda}}\mapsto
\cmplx{\sheaf{F}}_{(0)},\\
\cat{PDG}(X,\Lambda)\mto \cat{PDG}^{\cont}(X,\Lambda),&\qquad
\cmplx{\sheaf{F}}\mapsto
(\Lambda/I\tensor_{\Lambda}\cmplx{\sheaf{F}})_{I\in\openideals_{\Lambda}}.
\end{align*}
We use these functors to identify the two categories.
\end{rem}

If $\Lambda=\Int_{\ell}$, then the subcategory of complexes
concentrated in degree $0$ of $\cat{PDG}^{\cont}(X,\Int_{\ell})$
corresponds precisely to the exact category of flat constructible
$\ell$-adic sheaves on $X$ in the sense of \cite[Expos\'e~VI,
Definition~1.1.1]{SGA5}. In this sense, we recover the classical
theory.

If $f\colon Y\mto X$ is a morphism of schemes, we define a
Waldhausen exact functor
$$
f^*\colon\cat{PDG}^{\cont}(X,\Lambda)\mto\cat{PDG}^{\cont}(Y,\Lambda),\qquad
(\cmplx{\sheaf{F}}_I)_{I\in\openideals_{\Lambda}}\mapsto
(f^*\cmplx{\sheaf{F}}_I)_{I\in\openideals_{\Lambda}}.
$$
We will also need a Waldhausen exact functor that computes higher
direct images with proper support. For the purposes of this
article it suffices to use the following construction.

\begin{defn}
Let $f\colon X\mto Y$ be a morphism in $\cat{Sch}_{\FF}^{sep}$.
Then there exists a factorisation $f=p\comp j$ with $j\colon
X\oimm X'$ an open immersion and $p\colon X'\mto Y$ a proper
morphism. Let $\cmplx{\God}_{X'}\sheaf{G}$ denote the Godement
resolution of a complex $\cmplx{\sheaf{G}}$ of abelian \'etale
sheaves on $X'$. Define
\begin{align*}
\RDer f_!\colon \cat{PDG}^{\cont}(X,\Lambda)&\mto
\cat{PDG}^{\cont}(Y,\Lambda)\\
(\cmplx{\sheaf{F}}_I)_{I\in\openideals_{\Lambda}}&\mapsto(f_*\cmplx{\God}_{X'}
 j_!\sheaf{F}_I)_{I\in\openideals_{\Lambda}}
\end{align*}
\end{defn}

Obviously, this definition depends on the particular choice of the
compactification $f=p\comp j$. However, all possible choices will
induce the same homomorphisms
$$
\KTh_n(\RDer f_!)\colon \KTh_n(\cat{PDG}^{\cont}(X,\Lambda))\mto
\KTh_n(\cat{PDG}^{\cont}(Y,\Lambda))
$$
and this is all we need.

\begin{rem}
In \cite[Section~4.5]{Witte:PhD} we present a way to make the
construction of $\RDer f_!$ independent of the choice of a
particular compactification.
\end{rem}

\begin{prop}\label{prop:properties of Rf_!}
Let $f\colon X\mto Y$ be a morphism in $\cat{Sch}_{\FF}^{sep}$.
\begin{enumerate}
\item $\KTh_n(\RDer f_!)$ is independent of the choice of a
compactification $f=p\comp j$.

\item Let $\FF'$ be a subfield of $\FF$ and consider $f$ as a
morphism in $\cat{Sch}_{\FF'}^{sep}$. Then $\KTh_n(\RDer f_!)$
remains the same.

\item If $g\colon Y\mto Z$ is another morphism in
$\cat{Sch}_{\FF}^{sep}$, then
$$
\KTh_n(\RDer (g\comp f)_!)=\KTh_n(\RDer g_!)\comp \KTh_n(\RDer
f_!)
$$

\item For any cartesian square
$$
\xymatrix{Y\times_X Z\ar[r]^{f'}\ar[d]_{g'}&Z\ar[d]^g\\
 Y\ar[r]^{f}&X}
$$
in $\cat{Sch}_{\FF}^{sep}$ we have
$$
\KTh_n(f^*\RDer g_!)=\KTh_n(\RDer g'_!{f'}^*)
$$
\end{enumerate}
\end{prop}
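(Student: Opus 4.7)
\emph{Proof approach.} The common principle underlying all four statements is that two Waldhausen exact functors $F,G\colon \cat{W}\mto\cat{W'}$ connected by a natural transformation $F\Rightarrow G$ which is a levelwise weak equivalence induce equal maps on $\KTh$-groups. It therefore suffices in each case to construct such a natural comparison between the two functors being asserted equal, possibly via a zigzag, at each level $I\in\openideals_\Lambda$ of the inverse system.

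Statement (2) is essentially tautological: $\cat{PDG}^{\cont}(X,\Lambda)$ depends only on the underlying scheme $X$ via its \'etale site, and the constructions $j_!$, $p_*$, and the Godement resolution are intrinsic to schemes and morphisms, not tied to a choice of base field. A compactification over $\FF$ is also a compactification over $\FF'$, and it produces the same functor. For (1), given two compactifications $f=p_1\circ j_1=p_2\circ j_2$, form a dominating compactification by taking the scheme-theoretic closure $X'''$ of the image of the diagonal $X\mto X_1'\times_Y X_2'$. The projections yield proper morphisms $\pi_k\colon X'''\mto X_k'$ restricting to the identity on $X$, and for each $I$ proper base change combined with the functoriality of Godement gives natural quasi-isomorphisms $(p_k)_*\cmplx{\God}(j_k)_!\sheaf{F}_I\wto (p_k\pi_k)_*\cmplx{\God}(\tilde j_k)_!\sheaf{F}_I$, where $\tilde j_k\colon X\oimm X'''$. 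Both definitions of $\RDer f_!$ thus agree with the one coming from $X'''$ up to natural weak equivalence.

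For (3), pick compactifications $f=p\circ j$ and $g=q\circ i$, and apply Nagata to the composite $i\circ p\colon X'\mto Y'$ to obtain $i\circ p=\tilde p\circ \tilde j$ with $\tilde j$ open and $\tilde p$ proper. Then $q\circ \tilde p\circ (\tilde j\circ j)$ is a valid compactification of $g\circ f$, and the comparison with $\RDer g_!\circ \RDer f_!$ assembles from the identity $\tilde j_!j_!=(\tilde j\circ j)_!$, the natural map $\tilde p_*\cmplx{\God}_{X'}\mto \cmplx{\God}_{Y'}\tilde p_*$ (a quasi-isomorphism by proper base change applied level-wise to the finite coefficients $\Lambda/I$), and independence of compactification from (1). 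Statement (4) follows by pulling back a compactification $g=q\circ i$ along $f$ to obtain a compactification of $g'$, and then combining the exact equality $f^*i_!=i'_!{f'}^*$ for open immersions (immediate on stalks) with proper base change $f^*q_*\cmplx{\God}\wto q'_*\cmplx{\God}{f'}^*$ for the proper part.

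The chief technical obstacle is that these comparisons must be produced at the level of Waldhausen exact functors between $\cat{PDG}^{\cont}$-categories, rather than only on the underlying derived categories: one must verify that the comparison maps are levelwise quasi-isomorphisms of inverse systems compatible with the transition maps, and that the intermediate complexes remain $DG$-flat. Since we only need equality after passing to $\KTh$-groups, however, it suffices to check quasi-isomorphism for each member $I\in\openideals_\Lambda$, and this reduces to the corresponding classical compatibilities over the finite coefficient rings $\Lambda/I$, where the standard finite-coefficient étale machinery (Godement, proper base change, compactification) applies directly.
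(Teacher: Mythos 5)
Your proof follows the same standard compactification-and-comparison route that the paper's own proof invokes by citation (the paper simply defers to SGA\,4, tome\,3, Expos\'e XVII and to Section~4.5 of the author's thesis, where precisely these dominating-compactification, proper-base-change, and Godement-functoriality arguments are carried out). The underlying principle you identify — that a zigzag of natural transformations which are levelwise quasi-isomorphisms of the inverse systems induces equality of the $\KTh$-maps — is exactly the mechanism needed, and your case analysis is correct up to small notational slips (e.g.\ in (3) the Godement resolution should be taken on the intermediate compactification $\tilde X'$ rather than on $X'$, and the Godement comparison maps go via the common refinement $\cmplx{\God}_{Y'}\,\tilde p_*\cmplx{\God}_{\tilde X'}$ rather than directly between $\tilde p_*\cmplx{\God}$ and $\cmplx{\God}\,\tilde p_*$), none of which affects the substance.
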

\begin{proof}
All of this follows easily from \cite[Expos\'e~XXVII]{SGA4-3}. See
also \cite[Section 4.5.]{Witte:PhD}.
\end{proof}

\begin{defn}
Let $X$ be a scheme in $\cat{Sch}_{\FF}^{sep}$ and write $h\colon
X\mto \Spec \FF$ for the structure map, $s\colon \Spec
\algc{\FF}\mto \Spec \FF$ for the map induced by the embedding
into the algebraic closure. We define the Waldhausen exact functor
$$
\RDer_\FF\Sectc(\algc{X},-)\colon\cat{PDG}^{\cont}(X,\Lambda)\mto\cat{PDG}^{\cont}(\Lambda)
$$
to be the composition of
$$
\RDer h_!\colon \cat{PDG}^{\cont}(X,\Lambda)\mto
\cat{PDG}^{\cont}(\Spec \FF,\Lambda)
$$
with the section functor
$$
\cat{PDG}^{\cont}(\Spec
\FF,\Lambda)\mto\cat{PDG}^{\cont}(\Lambda), \qquad
(\cmplx{\sheaf{F}}_{I})_{I\in\openideals_{\Lambda}}\mto
(\Sect(\Spec
\algc{\FF},s^*\cmplx{\sheaf{F}}_I))_{I\in\openideals_{\Lambda}}.
$$
\end{defn}

\begin{rem}
If $\FF'$ is a subfield of $\FF$, then
$\RDer_\FF\Sectc(\algc{X},-)$ and $\RDer_{\FF'}\Sectc(\algc{X},-)$
are in fact quasi-isomorphic and hence, they induce the same
homomorphism of $\KTh$-groups. Nevertheless, it will be convenient
to distinguish between the two functors. We will omit the index if
the base field is clear from the context.
\end{rem}

The definition of $\ringtransf_{M}$ extends to
$\cat{PDG}^{\cont}(X,\Lambda)$.

\begin{defn}
For $(\cmplx{\sheaf{F}}_I)_{I\in\openideals_{\Lambda}}\in
\cat{PDG}^{\cont}(X,\Lambda)$ and $\cmplx{M}\in
\Lambda^{\op}\text{-}\cat{SP}(\Lambda')$ we set
$$
\ringtransf_{M}\left((\cmplx{\sheaf{F}}_I)_{I\in\openideals_{\Lambda}}\right)=
(\varprojlim_{J\in\openideals_{\Lambda}}
\Lambda'/I\tensor_{\Lambda'}\cmplx{(M\tensor_{\Lambda}\sheaf{F}_{J})})_{I\in\openideals_{\Lambda'}}
$$
and obtain a Waldhausen exact functor
$$
\ringtransf_{M}\colon
\cat{PDG}^{\cont}(X,\Lambda)\mto\cat{PDG}^{\cont}(X,\Lambda').
$$
\end{defn}

\begin{prop}\label{prop:compatibility of change of rings}
Let $\cmplx{M}$ be a complex in
$\Lambda^{\op}\text{-}\cat{SP}(\Lambda')$. Then the following
diagram commutes.
$$
\xymatrix@C+1cm{
\KTh_n(\cat{PDG}^{\cont}(X,\Lambda))\ar[r]^{\KTh_n\RDer\Sectc(X,-)}\ar[d]^{\KTh_n(\ringtransf_{M})}&
\KTh_n(\cat{PDG}^{\cont}(\Lambda))\ar[d]^{\KTh_n(\ringtransf_{M})}\\
\KTh_n(\cat{PDG}^{\cont}(X,\Lambda'))\ar[r]^{\KTh_n\RDer\Sectc(X,-)}&
\KTh_n(\cat{PDG}^{\cont}(\Lambda')) }
$$
\end{prop}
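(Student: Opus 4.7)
My plan is to produce a natural weak equivalence of Waldhausen exact functors
$$
\eta \colon \ringtransf_{M} \comp \RDer\Sectc(\algc{X},-) \wto \RDer\Sectc(\algc{X},-) \comp \ringtransf_{M},
$$
and then invoke the standard fact (\cite{Wal:AlgKTheo}) that naturally weakly equivalent Waldhausen exact functors induce the same homomorphism on $\KTh$-groups. Commutativity of the square then follows immediately.

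I would construct $\eta$ in two stages. Stage one is the finite-coefficient case. Fix a compactification $h = p \comp j$ of the structure map $h \colon X \mto \Spec \FF$, so that $\RDer\Sectc(\algc{X},-) = \Sect(\Spec\algc{\FF}, s^{*} p_{*} \cmplx{\God}_{X'} j_{!}\,(-))$ on $\cat{PDG}(X,\Lambda/I)$ for finite quotients. Since $M$ is strictly perfect as a complex of left $\Lambda'$-modules, each component $M^{n}$ is a direct summand of a finitely generated free $\Lambda'$-module, hence a finitely presented flat right $\Lambda$-module. Consequently, $M^{n} \tensor_{\Lambda} (-)$ is exact, commutes with stalks, with arbitrary products, and with sheafification. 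This gives natural, termwise isomorphisms through each of $j_{!}$, $p_{*}$, the Godement resolution $\cmplx{\God}_{X'}$ (whose components are products of stalks over geometric points), $s^{*}$, and the global section functor. Passing to the total complex assembles these into a natural quasi-isomorphism
$$
\eta_{I} \colon M \tensor_{\Lambda} \Sect(\Spec\algc{\FF}, s^{*} p_{*} \cmplx{\God}_{X'} j_{!}\, \cmplx{\sheaf{F}}_{I}) \wto \Sect(\Spec\algc{\FF}, s^{*} p_{*} \cmplx{\God}_{X'} j_{!}\, (M \tensor_{\Lambda} \cmplx{\sheaf{F}}_{I}))
$$
for every $I \in \openideals_{\Lambda}$.

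Stage two is to globalise $\eta$ in the adic direction. Unwinding the definition of $\ringtransf_{M}$ (Definition~\ref{defn:change of ring functor}, both for rings and for the sheafy version), each component at $I \in \openideals_{\Lambda'}$ of the two compositions is, respectively,
$$
\varprojlim_{J \in \openideals_{\Lambda}} \Lambda'/I \tensor_{\Lambda'} \bigl( M \tensor_{\Lambda} \Sect(\Spec\algc{\FF}, s^{*} p_{*} \cmplx{\God}_{X'} j_{!}\, \cmplx{\sheaf{F}}_{J}) \bigr)
$$
and the same expression with $M \tensor_{\Lambda} (-)$ moved inside $\Sect(\Spec\algc{\FF}, s^{*} p_{*} \cmplx{\God}_{X'} j_{!}\,(-))$. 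The natural transformations $\eta_{J}$ of the previous stage are functorial in $J$, hence compatible with the transition maps of the inverse system; they also commute with the left tensor $\Lambda'/I \tensor_{\Lambda'} (-)$ and with the inverse limit over $J$, since these latter operations commute past the section functor and the exact functors $j_{!}$, $p_{*}$, $\cmplx{\God}_{X'}$, $s^{*}$ (again using that each $M^{n}$ is finitely presented flat over $\Lambda'$). This provides the global $\eta$. One checks that the resulting morphism of inverse systems is a weak equivalence in $\cat{PDG}^{\cont}(\Lambda')$ level by level, since each $\eta_{J}$ is a quasi-isomorphism and quasi-isomorphisms are preserved by the subsequent $\Lambda'/I \tensor_{\Lambda'} (-)$ (the relevant complexes are already $DG$-flat) and by the limit (the systems are strict Mittag-Leffler because all transition maps become surjective after tensoring with $\Lambda'/I$).

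The step which needs the most care is the commutation of $M \tensor_{\Lambda} (-)$ with the Godement resolution and with taking global sections: these functors involve products over possibly infinite sets of geometric points. What makes the argument go through is precisely the strict perfectness of $M$ over $\Lambda'$, which ensures that each $M^{n}$ is finitely presented flat and therefore commutes with arbitrary products. Everything else, including independence from the chosen compactification, is already provided by Proposition~\ref{prop:properties of Rf_!} and the constructions of Section~\ref{sec:perf complexes}, so no further geometric input is needed.
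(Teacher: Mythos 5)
Your overall strategy — construct a natural weak equivalence $\ringtransf_M \comp \RDer\Sectc(\algc X,-) \wto \RDer\Sectc(\algc X,-) \comp \ringtransf_M$ and invoke Waldhausen's invariance under natural weak equivalence — is reasonable, but the key step that is supposed to make the commutation through the Godement resolution and $p_*$ go through is false.

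You write that since $M$ is strictly perfect over $\Lambda'$, ``each component $M^n$ is a direct summand of a finitely generated free $\Lambda'$-module, hence a finitely presented flat right $\Lambda$-module.'' The hypothesis only concerns the \emph{left} $\Lambda'$-module structure; it gives no control whatsoever over the \emph{right} $\Lambda$-module structure, and that is the side over which $M\tensor_\Lambda(-)$ is formed. Concretely, take $\Lambda=\Int_\ell[G]$ for a nontrivial finite $\ell$-group $G$, $\Lambda'=\Int_\ell$, and $M=\Int_\ell$ with trivial right $G$-action: then $M$ is free of rank one over $\Lambda'$ but is neither flat nor finitely presented over $\Lambda$. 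Or take $\Lambda=\Int_\ell$, $\Lambda'=\Int_\ell[[S]]$, $M=\Lambda'$: free of rank one over $\Lambda'$, but as a $\Int_\ell$-module it is an infinite product, so $M\tensor_{\Int_\ell}(-)$ does not commute with infinite products. With the claim gone, the map $M\tensor_\Lambda\cmplx{\God}_{X'}(-) \to \cmplx{\God}_{X'}(M\tensor_\Lambda(-))$ exists but need not be a termwise isomorphism (the Godement terms are infinite products of stalks), and the same issue recurs at $p_*$; so $\eta_I$ is not manifestly a quasi-isomorphism and the argument breaks.

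The fix cannot be to look at $M$ itself; one has to exploit the structure of $\ringtransf_M$. In the $I'$-component of $\ringtransf_M$ one first forms $\Lambda'/I'\tensor_{\Lambda'}M=M/I'M$; since $M$ is strictly perfect over $\Lambda'$ and $\Lambda'/I'$ is finite, $M/I'M$ is a bounded complex of \emph{finite} modules, and its right $\Lambda$-action then factors through some finite quotient $\Lambda/J_0$. For $J\subset J_0$ the inner inverse system $\Lambda'/I'\tensor_{\Lambda'}M\tensor_\Lambda\sheaf{F}_J$ is constant, so everything reduces to tensoring with a finite module over a finite ring, which does commute with products. That stabilization, not a nonexistent finite presentation of $M$ over $\Lambda$, is what makes a termwise comparison possible. (You should also be a bit more careful in the second stage: applying $\Lambda'/I'\tensor_{\Lambda'}(-)$ to a quasi-isomorphism is only harmless because $M$ is termwise $\Lambda'$-projective and the sheaf complexes are $DG$-flat; this deserves an explicit remark rather than being folded into ``commutes with.'') As written, the proof has a genuine gap at its central step.
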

\begin{proof}
See \cite[Proposition~5.5.7]{Witte:PhD}.
\end{proof}

Finally, we need the following result. Let $X$ be a connected
scheme and $f\colon Y\mto X$ a finite \emph{Galois covering} of
$X$ with \emph{Galois group} $G$ , i.\,e. $f$ is finite \'etale
and the degree of $f$ is equal to the order of
$G=\operatorname{Aut}_X(Y)$. We set
$$
\Int_{\ell}[G]_X^\sharp=f_!f^*\Int_{\ell}.
$$
Then $\Int_{\ell}[G]_X^\sharp$ is a locally constant constructible
flat sheaf of $\Int_{\ell}[G]$-modules. In fact, it corresponds to
the continous Galois module $\Int_{\ell}[G]$ on which the
fundamental group of $X$ acts contragrediently.

\begin{lem}\label{lem:twisting lemma}
Let $X$ be a connected scheme in $\cat{Sch}^{sep}_{\FF}$. Let $R$
be a finite $\Int_{\ell}$-algebra and let $\cmplx{\sheaf{F}}$ be a
bounded complex of flat, locally constant, and constructible
sheaves in $\cat{PDG}(X,R)$. Then there exists a finite Galois
covering $Y$ of $X$ with Galois group $G$ and a complex
$\cmplx{M}$ in $\Int_{\ell}[G]^{op}\text{-}\cat{SP}(R)$ such that
$$
\cmplx{\sheaf{F}}\isomorph \ringtransf_M(\Int_{\ell}[G]_X^\sharp).
$$
\end{lem}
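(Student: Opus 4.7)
The plan is to use the equivalence between flat, locally constant, constructible sheaves of $R$-modules on the connected scheme $X$ and continuous representations of $\fundG(X,\bar{x})$ on finitely generated projective (equivalently, flat) $R$-modules, where $\bar{x}$ is a fixed geometric point. Since $R$ is finite, a finitely generated $R$-module is finite as a set, so any continuous action factors through a finite quotient of $\fundG(X,\bar{x})$.

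First, I would pass to the stalk complex $\cmplx{M}_{0} = \cmplx{\sheaf{F}}_{\bar{x}}$, which under the equivalence above is a bounded complex of finitely generated projective $R$-modules equipped with a continuous $\fundG(X,\bar{x})$-action commuting with the differentials. Because $\cmplx{M}_0$ has only finitely many nonzero terms and each term carries a continuous action on a finite set, the union of the kernels of these actions on all nonzero $M_0^n$ is a normal open subgroup $H\subset\fundG(X,\bar{x})$. Setting $G=\fundG(X,\bar{x})/H$, I take $Y\mto X$ to be the connected finite Galois covering corresponding to $H$ and view $\cmplx{M}_0$ as a complex of $R$-$\Int_\ell[G]$-bimodules, using the quotient $G$-action converted to a right action via $m\cdot g = g^{-1}\cdot m$. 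Since every $M_0^n$ is finitely generated projective over $R$, this complex $\cmplx{M}$ lies in $\Int_\ell[G]^{\op}\text{-}\cat{SP}(R)$.

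It remains to verify $\ringtransf_{\cmplx{M}}(\Int_\ell[G]_X^\sharp)\isomorph\cmplx{\sheaf{F}}$. Both sides are bounded complexes of locally constant constructible sheaves on $X$ with terms trivialised by $Y$, so it is enough to check the isomorphism after taking the stalk at $\bar{x}$ together with its $\fundG(X,\bar{x})$-action. The stalk of $\Int_\ell[G]_X^\sharp$ at $\bar{x}$ is $\Int_\ell[G]$, with $\fundG(X,\bar{x})$ acting contragrediently through $G$; the tensor product $\cmplx{M}\tensor_{\Int_\ell[G]}\Int_\ell[G]$ is canonically $\cmplx{M}_0$ as an $R$-module, and unravelling the contragredient action together with the sign-twist in the bimodule structure identifies the resulting $\fundG(X,\bar{x})$-action with the original action on $\cmplx{M}_0$. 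This yields the desired isomorphism.

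The main technical obstacle is precisely this last bookkeeping step: one has to match the contragredient twist built into $\Int_\ell[G]_X^\sharp=f_!f^*\Int_\ell$ with the inversion $m\cdot g = g^{-1}\cdot m$ used to convert the left $G$-representation into a right $\Int_\ell[G]$-module structure, so that the two inversions cancel. Everything else (existence of $H$, identification of $\cmplx{M}$ with a bimodule complex in $\Int_\ell[G]^{\op}\text{-}\cat{SP}(R)$, reduction to stalks) is straightforward once the framework of Definition~\ref{defn:PDG(X,R)} and the standard equivalence with $\fundG(X,\bar{x})$-representations is in place.
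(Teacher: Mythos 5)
Your approach is essentially the same as the paper's: choose a finite Galois covering $f\colon Y\to X$ trivialising the terms of $\cmplx{\sheaf{F}}$ and take the sections over $Y$ (equivalently, the stalk at a geometric point) with its natural $G$-action as the bimodule complex $\cmplx{M}$; the paper phrases this directly in sheaf-theoretic language ($\cmplx{M}=\Sect(Y,f^*\cmplx{\sheaf{F}})$) and, like you, refers to a longer reference for the contragredient/inversion bookkeeping. One small slip: you write that the \emph{union} of the kernels of the $\fundG(X,\bar{x})$-actions on the $M_0^n$ is an open normal subgroup $H$, but a union of subgroups is not in general a subgroup; you want the \emph{intersection} of these finitely many open normal kernels, which is indeed open and normal, and the corresponding quotient $G$ is what indexes the covering $Y$.
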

\begin{proof}
Choose a large enough Galois covering $f\colon Y\mto X$ such that
$f^*\cmplx{\sheaf{F}}$ is a complex of constant sheaves and set
$\cmplx{M}=\Sect(Y,f^*\cmplx{\sheaf{F}})$. This is in a natural
way a complex in $\Int_{\ell}[G]^{op}\text{-}\cat{SP}(R)$ and
$$
\cmplx{M}\tensor_{\Int_{\ell}[G]}\Int_{\ell}[G]_X^\sharp\isomorph\cmplx{\sheaf{F}}
$$
See \cite[Section~5.6]{Witte:PhD} for further details.
\end{proof}

\section{On $\KTh_1(\Int_{\ell}[G][[T]])$}\label{sec:on KTh_1}

Let $G$ be a finite group and let $T$ denote an indeterminate that
commutes with every element of $\Int_{\ell}[G]$. We need some
results on the structure of $\KTh_1(\Int_{\ell}[G][[T]])$. Recall
that there exists a finite extension $F$ of $\Rat_{\ell}$ such
that $F[G]$ is \emph{split semisimiple}:
$$
F[G]\isomorph \prod_{k=1}^r \End_F(F^{s_k})
$$
for some integers $r, s_1,\dotsc, s_r$. Write $\mathcal{O}_F$ for
the valuation ring of $F$ and let $M$ be a \emph{maximal
$\Int_{\ell}$-order} in $F[G]$, i.\,e.\ an $\Int_{\ell}$-lattice
in $F[G]$ which is a subring and which is maximal with respect to
this property. Then
$$
M\isomorph \prod_{k=1}^r \End_{\mathcal{O}_F}(\mathcal{O}_F^{s_k})
$$
according to \cite[Theorem~1.9]{Oliver:WhiteheadGroups}. In
particular, the determinant map induces an isomorphism
$$
\KTh_1(M)\isomorph \bigoplus_{k=1}^r \mathcal{O}_F^{\times}.
$$
Theorem~2.5 of \emph{loc.\,cit.\ }then implies
$$
\SKTh_1(\Int_{\ell}[G])=\ker \KTh_1(\Int_{\ell}[G])\mto
\KTh_1(\Rat_{\ell}[G])=\ker \KTh_1(\Int_{\ell}[G])\mto \KTh_1(M).
$$

Analogously, we define a subgroup in
$\KTh_1(\Int_{\ell}[G][[T]])$.
\begin{defn}
Let $G$ be a finite group and choose a finite extension $F$ of
$\Rat_{\ell}$ such that $F[G]$ is split semisimple. We set
$$
\SKTh_1(\Int_{\ell}[G][[T]])=\ker \KTh_1(\Int_{\ell}[G][[T]])\mto
\KTh_1(M[[T]])
$$
where $M$ denotes the maximal $\Int_{\ell}$-order in $F[G]$.
\end{defn}

\begin{lem}\label{lem:description of SKTh_1}
For any finite group $G$,
$$
\SKTh_1(\Int_{\ell}[G][[T]])\isomorph\varprojlim_{n}
\SKTh_1(\Int_{\ell}[G\times \Int/(\ell^n)]).
$$
\end{lem}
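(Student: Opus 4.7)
The plan is to apply the five-lemma to the commutative diagram with exact rows
$$
\xymatrix@C-.4pc{
0 \ar[r] & \SKTh_1(\Int_{\ell}[G][[T]]) \ar[r] \ar[d] & \KTh_1(\Int_{\ell}[G][[T]]) \ar[r] \ar[d] & \KTh_1(M[[T]]) \ar[d] \\
0 \ar[r] & \varprojlim_n \SKTh_1(\Int_{\ell}[G\times\Int/(\ell^n)]) \ar[r] & \varprojlim_n \KTh_1(\Int_{\ell}[G\times\Int/(\ell^n)]) \ar[r] & \varprojlim_n \KTh_1(M[\Int/(\ell^n)])
}
$$
whose bottom row is exact because inverse limits are left exact (provided one takes $\SKTh_1(\Int_{\ell}[G\times\Int/(\ell^n)])=\ker(\KTh_1(\Int_{\ell}[G\times\Int/(\ell^n)])\to\KTh_1(M[\Int/(\ell^n)]))$, using the same reference order $M$ throughout). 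It will then suffice to prove that the middle and the right vertical maps are isomorphisms.

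For the middle map, I invoke Proposition~\ref{prop:K_1 of adic rings} to express $\KTh_1(\Int_{\ell}[G][[T]])$ as the inverse limit over its open ideals. The decisive step is to verify that the subfamily $J_{a,n}=(\ell^a,(1+T)^{\ell^n}-1)$ is cofinal: expanding $(1+T)^{\ell^n}-1=\sum_{k=1}^{\ell^n}\binom{\ell^n}{k}T^k$ and using $v_{\ell}(\binom{\ell^n}{k})=n-v_{\ell}(k)$ for $1\leq k\leq\ell^n$, one finds $J_{a,n}\subseteq(\ell^a,T^b)$ once $n\geq a+v_{\ell}((b-1)!)$. Under the identification $T\leftrightarrow\gamma_n-1$ for a chosen generator $\gamma_n$ of $\Int/(\ell^n)$, the quotient $\Int_{\ell}[G][[T]]/J_{a,n}$ becomes $(\Int/\ell^a)[G\times\Int/(\ell^n)]$, and a second application of Proposition~\ref{prop:K_1 of adic rings} to each adic ring $\Int_{\ell}[G\times\Int/(\ell^n)]$ writes $\KTh_1(\Int_{\ell}[G\times\Int/(\ell^n)])$ as the inverse limit over $a$ of $\KTh_1$ of the same finite quotients. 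Interchanging the order of the resulting double inverse limit now gives the middle isomorphism.

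For the right map, the decomposition $M\isomorph\prod_{k=1}^r \End_{\mathcal{O}_F}(\mathcal{O}_F^{s_k})$ induces $M[[T]]\isomorph\prod_k\Mat_{s_k}(\mathcal{O}_F[[T]])$ and $M[\Int/(\ell^n)]\isomorph\prod_k\Mat_{s_k}(\mathcal{O}_F[\Int/(\ell^n)])$. Both $\mathcal{O}_F[[T]]$ and $\mathcal{O}_F[\Int/(\ell^n)]$ are commutative and (semi)local, so Morita invariance reduces the statement to $\mathcal{O}_F[[T]]^{\times}\isomorph\varprojlim_n\mathcal{O}_F[\Int/(\ell^n)]^{\times}$. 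This follows from the $G=1$ instance of the preceding cofinality argument (which identifies $\mathcal{O}_F[[T]]$ with $\varprojlim_n\mathcal{O}_F[\Int/(\ell^n)]$ as rings) together with the fact that multiplicative inverses in an inverse system of rings are themselves compatible. The step I expect to require the most care is the cofinality estimate in the middle column; pinning down the definition of $\SKTh_1$ at each finite level—so that the bottom row is genuinely a kernel sequence without enlarging the splitting field $F$—also needs a short verification, but beyond that the argument is formal.
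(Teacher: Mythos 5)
Your approach is essentially the same as the paper's: both go through the ring isomorphism $\Int_{\ell}[[T]]\isomorph\varprojlim_n\Int_{\ell}[\Int/(\ell^n)]$ (which you prove by hand via the cofinality of the ideals $J_{a,n}=(\ell^a,(1+T)^{\ell^n}-1)$, whereas the paper cites \cite[Theorem~5.3.5]{NSW:CohomNumFields}), then apply Pro\-po\-si\-tion~\ref{prop:K_1 of adic rings} to get compatible isomorphisms for $\KTh_1(\Int_{\ell}[G][[T]])$ and $\KTh_1(M[[T]])$, and finally pass to kernels. Your cofinality estimate is a bit lax but correct, and the Morita/determinant reduction for the right-hand column is sound.

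The one point you explicitly defer — ``pinning down the definition of $\SKTh_1$ at each finite level \dots without enlarging the splitting field $F$'' — is in fact the only non-formal step in the paper's argument, so it deserves to be filled in rather than waved off. The issue is that $M[\Int/(\ell^n)]$ is generally \emph{not} a maximal $\Int_{\ell}$-order in a split semisimple algebra over $G\times\Int/(\ell^n)$, so the kernel you take in the bottom row is not a priori the $\SKTh_1$ appearing in the statement (defined relative to a maximal order $M'$ in $F'[G\times\Int/(\ell^n)]$). The paper resolves this with the commutative square
$$
\xymatrix{
\KTh_1(M[\Int/(\ell^n)])\ar[r]\ar[d]_{\isomorph}^{\det}&\KTh_1(M')\ar[d]_{\isomorph}^{\det}\\
\bigoplus\limits_{k=1}^r
\mathcal{O}_F[\Int/(\ell^n)]^{\times}\ar@{^{(}->}[r]&\bigoplus\limits_{k=1}^{r'}\mathcal{O}_{F'}^{\times}
}
$$
whose bottom arrow is injective; hence $\ker\big(\KTh_1(\Int_{\ell}[G\times\Int/(\ell^n)])\to\KTh_1(M[\Int/(\ell^n)])\big)$ coincides with $\ker\big(\KTh_1(\Int_{\ell}[G\times\Int/(\ell^n)])\to\KTh_1(M')\big)=\SKTh_1(\Int_{\ell}[G\times\Int/(\ell^n)])$. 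With that inserted, your five-lemma argument closes.
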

\begin{proof}
Let $F$ and $F'$ be splitting fields for $\Int_{\ell}[G]$ and
$\Int_{\ell}[G\times\Int/(\ell^n)]$, respectively and denote the
corresponding maximal orders by $M$ and $M'$. The commutativity of
the diagram
$$
\xymatrix{
\KTh_1(M[\Int/(\ell^n)])\ar[r]\ar[d]_{\isomorph}^{\det}&\KTh_1(M')\ar[d]_{\isomorph}^{\det}\\
\bigoplus\limits_{k=1}^r
\mathcal{O}_F[\Int/(\ell^n)]^{\times}\ar[r]^{\subset}&\bigoplus\limits_{k=1}^{r'}\mathcal{O}_{F'}^{\times}
}
$$
implies that
$$
\SKTh_1(\Int_{\ell}[G\times \Int/(\ell^n)])=\ker
\KTh_1(\Int_{\ell}[G\times \Int/(\ell^n)])\mto
\KTh_1(M[\Int/(\ell^n)]).
$$
By \cite[Theorem~5.3.5]{NSW:CohomNumFields} the choice of a
topological generator $\gamma\in \Int_{\ell}$ induces an
isomorphism
$$
\Int_{\ell}[[T]]\isomorph\varprojlim_{n}\Int_{\ell}[\Int/(\ell^n)],\qquad
T\mapsto \gamma-1.
$$
In particular, we have compatible isomorphisms
$$
\KTh_1(\Int_{\ell}[G][[T]])\isomorph \varprojlim_n
\KTh_1(\Int_{\ell}[G\times \Int/(\ell^n)]),\qquad
\KTh_1(M[[T]])\isomorph \varprojlim_n \KTh_1(M[\Int/(\ell^n)]).
$$
by Pro\-po\-si\-tion~\ref{prop:K_1 of adic rings}. Hence, we
obtain an isomorphism
$$
\SKTh_1(\Int_{\ell}[G][[T]])\isomorph\varprojlim_{n}
\SKTh_1(\Int_{\ell}[G\times \Int/(\ell^n)]),
$$
as claimed.
\end{proof}

\begin{prop}\label{prop:SKTh_1}
For any finite group $G$,
$$
\SKTh_1(\Int_{\ell}[G][[T]])\isomorph \SKTh_1(\Int_{\ell}[G]).
$$
\end{prop}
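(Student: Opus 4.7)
My approach is to use Lemma~\ref{lem:description of SKTh_1} to convert the statement into an inverse limit question about $\SKTh_1$ of finite group rings, and then to appeal to Oliver's structural results to show that the limit collapses onto $\SKTh_1(\Int_{\ell}[G])$.

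By Lemma~\ref{lem:description of SKTh_1},
$$
\SKTh_1(\Int_{\ell}[G][[T]])\isomorph\varprojlim_{n} \SKTh_1(\Int_{\ell}[G\times \Int/(\ell^n)]).
$$
For each $n$, the split surjection of groups $G\times\Int/(\ell^n)\twoheadrightarrow G$ (projection onto the first factor), with section given by the inclusion of the first factor, induces on $\SKTh_1$ a split surjection onto $\SKTh_1(\Int_{\ell}[G])$. These maps are compatible with the transition morphisms of the inverse system, so in the limit one obtains a split surjection
$$
\SKTh_1(\Int_{\ell}[G][[T]])\twoheadrightarrow \SKTh_1(\Int_{\ell}[G])
$$
whose kernel is $\varprojlim_n K_n$, where $K_n$ denotes the kernel of the split surjection at level $n$. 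Establishing the proposition is therefore equivalent to proving $\varprojlim_n K_n=0$.

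For the vanishing, I would use that each $\SKTh_1(\Int_{\ell}[H])$ is a finite abelian $\ell$-group by \cite[Chapter~2]{Oliver:WhiteheadGroups}, so each $K_n$ is finite and it suffices to show that for every fixed $n$ the transition map $K_m\to K_n$ is zero for $m$ sufficiently large. Here I would invoke Oliver's integral logarithm and the functorial presentation of $\SKTh_1(\Int_{\ell}[H])$ it provides; applied to the tower $\{G\times \Int/(\ell^n)\}_n$ and its transition maps, this presentation shows that on the complement of the summand coming from $\SKTh_1(\Int_{\ell}[G])$, the transition map is divisible by $\ell$, so that iterating it often enough annihilates any element of the finite $\ell$-group $K_n$.

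The principal obstacle is precisely this divisibility assertion: the explicit identification of the image of the transition maps under the integral logarithm requires careful bookkeeping of characters and conjugacy classes in $G\times \Int/(\ell^n)$. A computation of this type is carried out for profinite group rings in \cite[\S 1.5]{FK:CNCIT}, and my plan would be either to adapt their argument directly or to deduce the required statement from their results. Once $\varprojlim_n K_n=0$ is established, the split surjection above becomes an isomorphism, proving the proposition.
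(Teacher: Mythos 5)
Your reduction via Lemma~\ref{lem:description of SKTh_1} is the same one the paper uses, and the split surjection onto $\SKTh_1(\Int_{\ell}[G])$ at each level is correct. But from that point on the proof is only a plan, and its crucial step --- that the transition maps on the kernels $K_n$ are eventually divisible by $\ell$ --- is left unverified; you explicitly flag it as the "principal obstacle." As written, this is a genuine gap, not a proof.

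What you are missing is that the situation is substantially simpler than a limiting argument: the paper shows that \emph{each} transition map
$$
\SKTh_1(\Int_{\ell}[G\times \Int/(\ell^n)])\mto \SKTh_1(\Int_{\ell}[G])
$$
is already an isomorphism at every finite level, so that $K_n=0$ for all $n$ and no inverse-limit vanishing argument is needed. The input is Oliver's explicit presentation, \cite[Theorem~12.5]{Oliver:WhiteheadGroups}: $\SKTh_1(\Int_{\ell}[H])$ is a sum over $\Rat_{\ell}$-conjugacy classes of elements of order prime to $\ell$ in $H$ of certain $\HF_0(N_i/Z_i,\HF_2(Z_i,\Int)/\HF_2^{ab}(Z_i,\Int))_{(\ell)}$. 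Since $\Int/(\ell^n)$ is an $\ell$-group, a full set of representatives for these classes in $G\times\Int/(\ell^n)$ is just $(g_1,0),\dotsc,(g_k,0)$ with the $g_i$ representatives for $G$, and one has $N_i(G\times\Int/(\ell^n))=N_i(G)\times\Int/(\ell^n)$, $Z_i(G\times\Int/(\ell^n))=Z_i(G)\times\Int/(\ell^n)$. Applying \cite[Proposition~8.12]{Oliver:WhiteheadGroups} to the product $Z_i(G)\times\Int/(\ell^n)$ splits off a factor $\HF_2(\Int/(\ell^n),\Int)/\HF_2^{ab}(\Int/(\ell^n),\Int)$, which vanishes since $\Int/(\ell^n)$ is abelian. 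This identifies each term with the corresponding one for $G$, giving the isomorphism at each level directly. You should replace the speculative $\ell$-divisibility step by this computation; the integral-logarithm machinery you allude to is indeed what underlies Oliver's Theorem~12.5, but one should cite and apply that theorem concretely rather than sketch a separate divisibility argument.
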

\begin{proof}
By Lem\-ma~\ref{lem:description of SKTh_1} it suffices to prove
that the projection
$\Int_{\ell}[G\times\Int/(\ell^n)]\mto\Int_{\ell}[G]$ induces an
isomorphism
$$
\SKTh_1(\Int_{\ell}[G\times \Int/(\ell^n)])\isomorph
\SKTh_1(\Int_{\ell}[G]).
$$

Let $g_1,\dotsc,g_k$ be a system of representatives for the
$\Rat_{\ell}$-conjugacy classes of elements of order prime to
$\ell$ in $G$.  (Two elements $g$, $h$ of order $r$ prime to
$\ell$ are called $\Rat_{\ell}$-conjugated if $g^a=xhx^{-1}$ for
some $x\in G$, $a\in \Gal(\Rat_{\ell}(\zeta_r)/\Rat_{\ell})\subset
(\Int/(r))^{\times}$.) Let $r_i$ denote the order of $g_i$ and set
\begin{align*}
N_i(G)&=\set{x\in G\quad\mid\text{$xg_ix^{-1}=g_i^a$ for some $a\in\Gal(\Rat_{\ell}(\zeta_{r_i})/\Rat_{\ell})$}},\\
Z_i(G)&=\set{x\in G\quad\mid\text{$xg_ix^{-1}=g_i$}}.
\end{align*}
Furthermore, let
$$
\HF_2^{ab}(Z_i(G),\Int)=\im \bigoplus_{\substack{H\subset
Z_i(G)\\\text{abelian}}} \HF_2(H,\Int)\mto \HF_2(Z_i(G),\Int)
$$
denote the subgroup of the second homology group generated by
elements induced up from abelian subgroups of $Z_i(G)$. According
to \cite[Theorem~12.5]{Oliver:WhiteheadGroups} there exists an
isomorphism
$$
\SKTh_1(\Int_{\ell}[G])\isomorph \bigoplus_{i=0}^k
\HF_0(N_i(G)/Z_i(G),
\HF_2(Z_i(G),\Int)/\HF_2^{ab}(Z_i(G),\Int))_{(\ell)}.
$$
Now, $(g_1,0),\dotsc,(g_k,0)$ is a system of representatives for
the $\Rat_{\ell}$-conjugacy classes of elements of order prime to
$\ell$ in $G\times \Int/(\ell^n)$ and
$$
N_i(G\times \Int/(\ell^n))=N_i(G)\times\Int/(\ell^n),\qquad
Z_i(G\times \Int/(\ell^n))=Z_i(G)\times\Int/(\ell^n).
$$
By \emph{loc.\,cit.}, Proposition~8.12, we have
\begin{multline*}
\HF_2(Z_i(G)\times\Int/(\ell^n),\Int)/\HF_2^{ab}(Z_i(G)\times\Int/(\ell)^n,\Int)=\\
\HF_2(Z_i(G),\Int)/\HF_2^{ab}(Z_i(G),\Int)\times\HF_2(\Int/(\ell^n),\Int)/\HF_2^{ab}(\Int/(\ell^n),\Int)
\end{multline*}
and clearly,
$$
\HF_2(\Int/(\ell^n),\Int)=\HF_2^{ab}(\Int/(\ell^n),\Int).
$$
From this, the claim of the lemma follows easily.
\end{proof}

The following proposition was proved in
\cite[Proposition~2.3.7]{FK:CNCIT}
 in the case of number fields.

\begin{prop}\label{prop:vanishing in the limit}
Let $Q$ be a function field of transcendence degree $1$ over a
finite field $\FF$ and let $\ell$ be any prime. Then
$$
\varprojlim_{L} \SKTh_1(\Int_{\ell}[\Gal(L/Q)])=0.
$$
where $L$ runs through the finite Galois extensions of $Q$ in a
fixed separable closure $\algc{Q}$ of $Q$.
\end{prop}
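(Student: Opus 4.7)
The plan is to adapt the number-field proof of \cite[Proposition~2.3.7]{FK:CNCIT} to the function field setting. The strategy rests on two inputs: Oliver's description of $\SKTh_1$ for finite group rings (as invoked in Proposition~\ref{prop:SKTh_1}) and the cohomological structure of the absolute Galois group $G_Q = \Gal(\algc Q/Q)$ of a global function field.

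First, I would apply Oliver's isomorphism to each finite Galois extension $L/Q$: for $G = \Gal(L/Q)$, it writes $\SKTh_1(\Int_\ell[G])$ as a direct sum, indexed by $\Rat_\ell$-conjugacy classes $[g]$ of elements of order prime to $\ell$, of summands $H_0(N_g/Z_g, \HF_2(Z_g,\Int)/\HF_2^{ab}(Z_g,\Int))_{(\ell)}$. After checking that this decomposition is compatible with the transition maps as $L$ grows, the problem reduces to proving, for each $\Rat_\ell$-conjugacy class of an $\ell'$-torsion element $\tilde g\in G_Q$, that the corresponding inverse limit of summands vanishes. By Pontryagin duality, this becomes the assertion that every class in the continuous cohomology $H^2_{\cont}(\overline Z_{\tilde g},\Rat_\ell/\Int_\ell)$, where $\overline Z_{\tilde g}\subset G_Q$ is the closed centralizer of a lift of $\tilde g$, is, modulo the action of $\overline N_{\tilde g}/\overline Z_{\tilde g}$, a sum of classes induced up from closed abelian subgroups.

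For $\ell\neq p$, the relevant cohomological input is Tsen's theorem, which gives $\cd_\ell(G_{\algc\FF Q})\leq 1$; combined with $\cd_\ell(\Gal(\algc\FF/\FF))\leq 1$ this yields $\cd_\ell(\overline Z_{\tilde g})\leq 2$, and the Hochschild--Serre spectral sequence for the extension $1\to\overline Z_{\tilde g}\cap G_{\algc\FF Q}\to \overline Z_{\tilde g}\to \overline Z_{\tilde g}/(\overline Z_{\tilde g}\cap G_{\algc\FF Q})\to 1$ forces every $H^2$-class to arise from a cup product of two $H^1$-classes, hence from restriction along a procyclic (and therefore abelian) closed subgroup. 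The case $\ell=p$ requires a separate argument, using the explicit Artin--Schreier--Witt description of $H^1$ in characteristic $p$ and the structure of the wild inertia to reduce the $p$-primary $H^2$-contributions to abelian subgroups as well.

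The main obstacle will be this last step, and especially the case $\ell=p$. For $\ell\neq p$ the cohomological-dimension bounds make the reduction to abelian subgroups essentially routine, mirroring the number field argument; but at the residue characteristic the standard bounds fail, and one has to exploit characteristic-$p$-specific structure instead. A secondary technical hurdle is verifying that Oliver's decomposition is functorial along the tower $L/Q$ in the way required to translate the level-by-level vanishing into the claimed vanishing of the inverse limit.
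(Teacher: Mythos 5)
Your proposal has the two cases exactly backwards in difficulty, and this is the main gap. For $\ell=p$, the paper's argument is a one-liner: $\Gal(\algc Q/L)$ for a global function field of characteristic $p$ has $\cd_p \le 1$ (this is \cite[Theorem~10.1.11.(iv)]{NSW:CohomNumFields}), so $\HF^2(\Gal(\algc Q/L),\Rat_p/\Int_p)=0$ for trivial reasons, and no Artin--Schreier--Witt analysis is needed. You have flagged precisely this case as the ``main obstacle,'' which it is not. Conversely, for $\ell\neq p$ the paper does \emph{not} settle for showing that $H^2$-classes come from cup products over procyclic subgroups; it proves the outright vanishing $\HF^2(\Gal(\algc Q/L),\Rat_\ell/\Int_\ell)=0$ by passing to $L_\infty=\bigcup_n L(\zeta_{\ell^n})$, using Kummer theory to identify $\HF^1(\Gal(\algc Q/L_\infty),\Rat_\ell/\Int_\ell)$ with $L_\infty^\times\otimes\Rat_\ell/\Int_\ell(-1)$, and then computing that $\HF^1(\Gal(L_\infty/L),-)$ of this module is zero by writing it as a colimit of coinvariants which all vanish. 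This mirrors Schneider's number-field argument rather than any cohomological-dimension/cup-product heuristic.

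There is also a soundness issue in your $\ell\neq p$ argument as stated. The Hochschild--Serre spectral sequence with $\cd_\ell$-bounds gives a filtration of $H^2(\overline Z_{\tilde g})$ with graded piece a subquotient of $H^1(G/N, H^1(N,-))$; it does not by itself produce a decomposition of every class as a cup product of two $H^1$-classes, nor would such a cup product automatically restrict from a single procyclic subgroup unless the two $H^1$-classes factor through a common rank-one quotient. More importantly, this whole intermediate step is unnecessary: the paper cites \cite[Proposition~2.3.7]{FK:CNCIT}, which already packages the Oliver-style decomposition and the transition-map compatibility (your ``secondary technical hurdle'') into the clean reduction ``it suffices to show $\HF^2(\Gal(\algc Q/L),\Rat_\ell/\Int_\ell)=0$ for every finite $L/Q$.'' So the right move is to invoke that reduction verbatim and then handle the two cases ($\ell=p$ via $\cd_p\le 1$, $\ell\neq p$ via Kummer theory), rather than re-deriving Oliver's decomposition level by level and proving only the weaker induced-from-abelian statement.
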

\begin{proof}
By the same argument as in the proof of Proposition~2.3.7 in
\cite{FK:CNCIT}, it suffices to prove that
$$
\HF^2(\Gal(\algc{Q}/L),\Rat_{\ell}/\Int_{\ell})=0
$$
for any finite extension $L$ of $Q$.

If $\ell$ is different from the characteristic of $\FF$, then the
vanishing of this group can be deduced via the same argument as
the analogous statement for number fields given in \cite[$\S$ 4,
Satz~1]{Schn:Galoiskohomologiegruppen}: Let
$$
L_{\infty}=\bigcup_n L(\zeta_{\ell^n}).
$$
Then
$$
\HF^2(\Gal(\algc{Q}/L),\Rat_{\ell}/\Int_{\ell})=\HF^1(\Gal(L_{\infty}/L),\HF^1(\Gal(\algc{Q}/L_{\infty}),\Rat_{\ell}/\Int_{\ell}))
$$
and by Kummer theory,
$$
\HF^1(\Gal(\algc{Q}/L_{\infty}),\Rat_{\ell}/\Int_{\ell})=L_{\infty}^{\times}\tensor_{\Int}\Rat_{\ell}/\Int_{\ell}(-1).
$$
Now
$$
\HF^1(\Gal(L_{\infty}/L),L_{\infty}^{\times}\tensor_{\Int}\Rat_{\ell}/\Int_{\ell}(-1))=\varinjlim_{n}
\HF^1(\Gal(L_{\infty}/L),L(\zeta_{\ell^n})^{\times}\tensor_{\Int}\Rat_{\ell}/\Int_{\ell}(-1))
$$
by \cite[Proposition~1.5.1]{NSW:CohomNumFields}  and
$$
\HF^1(\Gal(L_{\infty}/L),L(\zeta_{\ell^n})^{\times}\tensor_{\Int}\Rat_{\ell}/\Int_{\ell}(-1))=
\big(L(\zeta_{\ell^n})^{\times}\tensor_{\Int}\Rat_{\ell}/\Int_{\ell}(-1)\big)_{\Gal(L_{\infty}/L)}
$$
by \emph{loc.\,cit.}, Proposition~1.6.13. Since the latter group
is a factor group of
$$
\big(L(\zeta_{\ell^n})^{\times}\tensor_{\Int}\Rat_{\ell}/\Int_{\ell}(-1)\big)_{\Gal(L_{\infty}/L(\zeta_{\ell^n}))}=0,
$$
the claim is proved.

If $\ell$ is equal to the characteristic of $\FF$, then the
cohomological $\ell$-dimension of $\Gal(\algc{Q}/L)$ is known to
be $1$ \cite[Theorem~10.1.11.(iv)]{NSW:CohomNumFields} and hence,
the second cohomology group of $\Rat_{\ell}/\Int_{\ell}$ vanishes
for trivial reasons.
\end{proof}

\section{$L$-Functions}\label{sec:L-functions}

Consider an adic ring $\Lambda$ and let $\Lambda[[T]]$ denote the
ring of power series in the indeterminate $T$ (where $T$ is
assumed to commute with every element of $\Lambda$). The ring
$\Lambda[[T]]$ is still an adic ring whose Jacobson radical
$\Jac(\Lambda[[T]])$ is generated by $\Jac(\Lambda)$ and $T$. In
particular, we conclude from Pro\-po\-si\-tion~\ref{prop:K_1 of
adic rings} that
$$
\KTh_1(\Lambda[[T]])=\varprojlim_n
\KTh_1(\Lambda[[T]]/\Jac(\Lambda[[T]])^n)
$$
is a profinite group.

Let $\FF$ be a finite field. We write $X_0$ for the set of closed
points of a scheme $X$ in $\cat{Sch}_{\FF}$. If $x\in X_0$ is a
closed point, then
$$
\algc{x}=x\times_{\Spec \FF}\Spec \algc{\FF}
$$
consists of finitely many points, whose number is given by the
degree $\deg(x)$ of $x$, i.\,e.\ the degree of the residue field
$k(x)$ of $x$ as a field extension of $\FF$. Let
$$
s_x\colon\algc{x}\mto X
$$
denote the structure map. For any complex
$$
\cmplx{\sheaf{F}}=(\cmplx{\sheaf{F}}_I)_{I\in\openideals_{\Lambda}}
$$
in $\cat{PDG}^{\cont}(X,\Lambda)$, we write
$$
\cmplx{\sheaf{F}}_{x}=(\Sect(\algc{x},s_x^*\cmplx{\sheaf{F}}_I))_{I\in\openideals_{\Lambda}}.
$$
This defines a Waldhausen exact functor
$$
\cat{PDG}^{\cont}(X,\Lambda)\mto\cat{PDG}^{\cont}(\Lambda), \qquad
\cmplx{\sheaf{F}}\mapsto \cmplx{\sheaf{F}}_{x}.
$$
Note that $\cmplx{\sheaf{F}}_{x}$ can also be written as the
product over the stalks of $\sheaf{F}$ in the points of
$\algc{x}$:
$$
\cmplx{\sheaf{F}}_{x}\isomorph\prod_{\xi\in\algc{x}}((\cmplx{\sheaf{F}}_I)_{\xi})_{I\in\Lambda}.
$$

The geometric Frobenius automorphism
$$
\Frob_{\FF}\in\Gal(\algc{\FF}/\FF)
$$
operates on $\cmplx{\sheaf{F}}_{x}$ through its
action on $\algc{x}$. Hence, it also operates on
$\ringtransf_{\Lambda[[T]]}(\cmplx{\sheaf{F}}_{x})$. Here,
$$
\ringtransf_{\Lambda[[T]]}\colon
\cat{PDG}^{\cont}(\Lambda)\mto\cat{PDG}^{\cont}(\Lambda[[T]])
$$
denotes the change of ring functor with respect to the
$\Lambda[[T]]$-$\Lambda$-bimodule $\Lambda[[T]]$, as constructed
in De\-fi\-ni\-tion~\ref{defn:change of ring functor}. The
morphism
$$
\id-\Frob_{\FF}T\colon\ringtransf_{\Lambda[[T]]}(\cmplx{\sheaf{F}}_{x})\mto\ringtransf_{\Lambda[[T]]}(\cmplx{\sheaf{F}}_{x}).
$$
is a natural isomorphism whose inverse is given by
$$
\sum_{n=0}^{\infty}\Frob_{\FF}^nT^n.
$$

\begin{defn}
The class
$$
E_x(\cmplx{\sheaf{F}},T)=[\ringtransf_{\Lambda[[T]]}(\cmplx{\sheaf{F}}_{x})\xrightarrow{\id-\Frob_{\FF}T}
\ringtransf_{\Lambda[[T]]}(\cmplx{\sheaf{F}}_{x})]^{-1}
$$
in $\KTh_1(\Lambda[[T]])$ is called the \emph{Euler factor} of
$\cmplx{\sheaf{F}}$ at $x$.
\end{defn}

One can easily verify that the Euler factor is multiplicative on
exact sequences and that
$$
E_x(\cmplx{\sheaf{F}},T)=E_x(\cmplx{\sheaf{G}},T)
$$
if the complexes $\cmplx{\sheaf{F}}$ and $\cmplx{\sheaf{G}}$ are
quasi-isomorphic. Hence, the above assignment extends to a
homomorphism
$$
E_x(-,T)\colon \KTh_0(\cat{PDG}^{\cont}(X,\Lambda))\mto
\KTh_1(\Lambda[[T]]).
$$

\begin{lem}\label{lem:Formula for Euler factor}
Let $\xi\in \algc{x}$ be a geometric point. Then
$$
E_x(\cmplx{\sheaf{F}},T)=
[\ringtransf_{\Lambda[[T]]}(\cmplx{\sheaf{F}}_\xi)
\xrightarrow{\id-\Frob_{k(x)}T^{\deg(x)}}
\ringtransf_{\Lambda[[T]]}(\cmplx{\sheaf{F}}_\xi)]^{-1}.
$$
\end{lem}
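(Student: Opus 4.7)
The plan is to reduce the claimed equality to an explicit matrix manipulation in $\KTh_1(\Lambda[[T]])$ using the cyclic structure of the Frobenius action on $\algc{x}$. Since $E_x$ is defined as an inverse, it suffices to prove the identity
\begin{equation*}
[\id - \Frob_{\FF} T \text{ on } \ringtransf_{\Lambda[[T]]}(\cmplx{\sheaf{F}}_x)] = [\id - \Frob_{k(x)} T^{d} \text{ on } \ringtransf_{\Lambda[[T]]}(\cmplx{\sheaf{F}}_{\xi})]
\end{equation*}
in $\KTh_1(\Lambda[[T]])$, where $d=\deg(x)$.

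First, I would enumerate the geometric points $\algc{x} = \{\xi_0,\xi_1,\dotsc,\xi_{d-1}\}$ with $\xi_0=\xi$ and $\Frob_{\FF}(\xi_i)=\xi_{i+1 \bmod d}$. The factorisation $\cmplx{\sheaf{F}}_x\isomorph \prod_{i=0}^{d-1}\cmplx{\sheaf{F}}_{\xi_i}$ recorded just before the lemma, combined with the canonical Frobenius-induced identifications $\cmplx{\sheaf{F}}_{\xi_i}\isomorph \cmplx{\sheaf{F}}_{\xi}$, converts $\id-\Frob_{\FF}T$ into a $d\times d$ block endomorphism of $\ringtransf_{\Lambda[[T]]}(\cmplx{\sheaf{F}}_{\xi})^d$ with the identity on the diagonal, $-T\cdot\id$ on the subdiagonal, and $-\Frob_{k(x)}T$ in the upper-right corner; the wrap-around entry is precisely $\Frob_{k(x)}$ because $\Frob_{\FF}^d$ acts on the stalk at $\xi$ as $\Frob_{k(x)}$.

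Next, I would perform successive elementary row operations on this block matrix: for $i=1,\dotsc,d-1$, add $T$ times the $i$-th row to the $(i+1)$-st. Each step is multiplication by a lower-triangular unipotent block matrix, hence trivial in $\KTh_1(\Lambda[[T]])$ (by the classical identification of $\KTh_1$ of a semilocal ring with $GL/[GL,GL]$, available through Proposition~\ref{prop:embedding SP(Lambda) in PDG(Lambda)} and Proposition~\ref{prop:K_1 of adic rings}, or directly via relation~(1) of Proposition~\ref{prop:presentation of K_1}). After $d-1$ such steps the matrix is upper triangular with diagonal blocks $(\id,\dotsc,\id,\id-\Frob_{k(x)}T^{d})$.

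Finally, I would invoke the filtration of $\ringtransf_{\Lambda[[T]]}(\cmplx{\sheaf{F}}_{\xi})^{d}$ by the first $d-1$ summands. The triangular matrix respects this filtration, restricts to the identity on the subcomplex, and induces $\id-\Frob_{k(x)}T^{d}$ on the quotient; relation~(3) of Proposition~\ref{prop:presentation of K_1} then yields that its class equals $[\id-\Frob_{k(x)}T^{d}]$, proving the claim. The main technical nuisance I anticipate is the bookkeeping in the first step: tracking the contravariant Frobenius action on stalks carefully enough to ensure that the wrap-around entry is indeed $-\Frob_{k(x)}T$ rather than some inverse or twist thereof; once the matrix shape is correctly identified, the rest is formal.
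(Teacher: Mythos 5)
Your proof is correct and follows the same route as the paper: both identify $\cmplx{\sheaf{F}}_x$ with $(\cmplx{\sheaf{F}}_\xi)^{\deg(x)}$ carrying the cyclic Frobenius block matrix with $\Frob_{k(x)}$ in the wrap-around corner, and both reduce $\id-\Frob_\FF T$ by a lower unitriangular matrix (your composition of the $d-1$ elementary row operations is exactly the paper's matrix $A$) to the upper-triangular form with diagonal $(\id,\dotsc,\id,\id-\Frob_{k(x)}T^{d})$. Your concluding appeal to relation~(3) of Proposition~\ref{prop:presentation of K_1} makes explicit a step the paper leaves implicit, but the argument is the same.
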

\begin{proof}
The Frobenius automorphism $\Frob_{\FF}$ induces isomorphisms
$\cmplx{\sheaf{F}}_{\Frob_{\FF}^k\xi}\isomorph\cmplx{\sheaf{F}}_{\xi}$
for $k=1,\dotsc, \deg(x)$. For $k=\deg(x)$ we have
$\Frob_{\FF}^k\xi=\xi$ and the isomorphism is given by the
operation of $\Frob_{k(x)}$ on $\cmplx{\sheaf{F}}_{\xi}$. Hence,
we may identify $\cmplx{\sheaf{F}}_x$ with the complex
$(\cmplx{\sheaf{F}}_{\xi})^{\deg(x)}$, on which the Frobenius
$\Frob_{\FF}$ acts through the matrix
$$
\begin{pmatrix}
0&\hdotsfor{2}&0&\Frob_{k(x)}\\
\id&0&\hdotsfor{2}&0\\
0&\id&0&\hdots&0\\
\vdots&\ddots&\ddots&\ddots&\vdots\\
0&\hdots&0&\id&0
\end{pmatrix}.
$$
Let $A$ be the automorphism of
$\ringtransf_{\Lambda[[T]]}((\cmplx{\sheaf{F}}_{\xi})^{\deg(x)})$
given by the matrix
$$
\begin{pmatrix}
\id&0&\hdotsfor{2}&0\\
\id T&\id&0&\dots&0\\
\id T^2&\id T&\id&0&\vdots\\
\vdots&\ddots&\ddots&\ddots&0\\
\id T^{\deg(x)-1}&\dots&\id T^2&\id T&\id
\end{pmatrix}
$$
Then $A(\id-\Frob_{\FF}T)$ corresponds to the matrix
$$
\begin{pmatrix}
\id&0&\hdots&0& -\Frob_{k(x)}T\\
0&\id&0&\hdots& -\Frob_{k(x)}T^2\\
\vdots&&\ddots&&\vdots\\
0&\hdots&0&\id& -\Frob_{k(x)}T^{\deg(x)-1}\\
0&\hdotsfor{2}&0&\big(\id-\Frob_{k(x)}T^{\deg(x)}\big)
\end{pmatrix}
$$
Moreover, we have $[A]=1$ in $\KTh_1(\Lambda[[T]])$. Hence,
$$
[\ringtransf_{\Lambda[[T]]}(\cmplx{\sheaf{F}}_{x})\xrightarrow{\id-\Frob_{\FF}T}
\ringtransf_{\Lambda[[T]]}(\cmplx{\sheaf{F}}_{x})]
=[\ringtransf_{\Lambda[[T]]}(\cmplx{\sheaf{F}}_\xi)
\xrightarrow{\id-\Frob_{k(x)}T^{\deg(x)}}
\ringtransf_{\Lambda[[T]]}(\cmplx{\sheaf{F}}_\xi)]
$$
as claimed.
\end{proof}

\begin{prop}
The infinite product
$$
\prod_{x\in X_0}E_x(\cmplx{\sheaf{F}},T)
$$
converges in the profinite topology of $\KTh_1(\Lambda[[T]])$.
\end{prop}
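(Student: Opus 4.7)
The plan is to exploit the profiniteness of $\KTh_1(\Lambda[[T]])$ together with the explicit formula for Euler factors provided by Lemma~\ref{lem:Formula for Euler factor}. By Proposition~\ref{prop:K_1 of adic rings}, applied to the adic ring $\Lambda[[T]]$, we have
$$
\KTh_1(\Lambda[[T]]) \isomorph \varprojlim_{I \in \openideals_{\Lambda[[T]]}} \KTh_1(\Lambda[[T]]/I),
$$
so a fundamental system of open neighbourhoods of $1$ in the profinite topology is given by the kernels of the reduction maps modulo open ideals $I$ of $\Lambda[[T]]$. To establish convergence of the infinite product it therefore suffices to verify that, for every such $I$, all but finitely many of the Euler factors $E_x(\cmplx{\sheaf{F}},T)$ have trivial image in $\KTh_1(\Lambda[[T]]/I)$.

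Fix an open ideal $I \subset \Lambda[[T]]$. Since $T$ lies in $\Jac(\Lambda[[T]])$ and $I$ contains a power of the Jacobson radical (by the definition of an adic ring), there exists an integer $N \geq 1$ with $T^N \in I$. By Lemma~\ref{lem:Formula for Euler factor}, for any geometric point $\xi$ above $x$ the Euler factor $E_x(\cmplx{\sheaf{F}},T)$ is represented by the automorphism $\id - \Frob_{k(x)} T^{\deg(x)}$ on $\ringtransf_{\Lambda[[T]]}(\cmplx{\sheaf{F}}_\xi)$. Whenever $\deg(x) \geq N$, the term $T^{\deg(x)}$ vanishes modulo $I$, so this automorphism reduces to the identity and its class in $\KTh_1(\Lambda[[T]]/I)$ is trivial.

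Since $X$ is of finite type over $\FF$, the set of closed points of $X$ of degree less than $N$ is finite. Hence at most finitely many Euler factors have nontrivial image modulo $I$, so the net of partial products is eventually constant in every finite quotient of $\KTh_1(\Lambda[[T]])$. This forces convergence in the profinite topology. The one point that warrants explicit mention is that the passage from the representative in Lemma~\ref{lem:Formula for Euler factor} to the vanishing of the associated $\KTh_1$-class after reduction modulo $I$ is legitimate, which follows from Proposition~\ref{prop:compatibility of change of rings}: reduction modulo $I$ is induced by a Waldhausen exact functor on $\cat{PDG}^{\cont}(\Lambda[[T]])$, and it sends the class of an identity autoequivalence to $1$ by Proposition~\ref{prop:presentation of K_1}.
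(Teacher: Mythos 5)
Your proof is correct and follows essentially the same route as the paper: both use Lemma~\ref{lem:Formula for Euler factor} to see that $E_x(\cmplx{\sheaf{F}},T)$ becomes trivial once $T^{\deg(x)}$ dies in the relevant quotient, together with the fact that $X$ has only finitely many closed points of bounded degree. The only difference is presentational — you work with an arbitrary open ideal $I\subset\Lambda[[T]]$ and extract $T^N\in I$, whereas the paper directly reduces modulo $(T^m)$ — but this is the same argument.
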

\begin{proof}
For each integer $m$, there exist only finitely many closed points
$x\in X_0$ with $\deg(x)<m$. If $\deg(x)\geq m$, then we conclude
from Lem\-ma~\ref{lem:Formula for Euler factor} that the image of
$E_x(\cmplx{\sheaf{F}},T)$ in $\KTh_1(\Lambda[T]/(T^m))$ is $1$.
\end{proof}

\begin{defn}
The \emph{$L$-function} of the complex $\cmplx{\sheaf{F}}$ in
$\cat{PDG}^{\cont}(X,\Lambda)$ is given by
$$
L_{\FF}(\cmplx{\sheaf{F}},T)=\prod_{x\in
X_0}E_x(\cmplx{\sheaf{F}},T)\in \KTh_1(\Lambda[[T]])
$$
\end{defn}

\begin{rem}
If $\FF'$ is a subfield of $\FF$, then Lem\-ma~\ref{lem:Formula
for Euler factor} implies that
$$
L_{\FF'}(\cmplx{\sheaf{F}},T)=L_{\FF}(\cmplx{\sheaf{F}},T^{[\FF:\FF']})\in
\KTh_1(\Lambda[[T]]).
$$
\end{rem}

\begin{rem}
If $\Lambda$ is commutative, the determinant induces an
isomorphism
$$
\det\colon \KTh_1(\Lambda[[T]])\mto \Lambda[[T]]^{\times}.
$$
In particular, we see that the $L$-function agrees with the one
defined in \cite[Fonction $L$ mod $\ell^n$]{SGA4h} in the case of
commutative adic rings.
\end{rem}

\section{The Grothendieck trace formula}\label{sec:Grothendieck
trace formula}

In this section, we will prove the Grothendieck trace formula for
our $L$-functions.

\begin{defn}
For a scheme $X$ in $\cat{Sch}^{sep}_\FF$ and a complex
$\cmplx{\sheaf{F}}$ in $\cat{PDG}^{\cont}(X,\Lambda)$ we let
$\mathcal{L}_{\FF}(\cmplx{\sheaf{F}},T)$ denote the element
$$
\big[\ringtransf_{\Lambda[[T]]}\big(\RDer_\FF\Sectc(\algc{X},\cmplx{\sheaf{F}})\big)
\xrightarrow{\id-\Frob_{\FF}
T}\ringtransf_{\Lambda[[T]]}\big(\RDer_\FF\Sectc(\algc{X},\cmplx{\sheaf{F}})\big)\big]^{-1}
$$
in $\KTh_1(\Lambda[[T]])$.
\end{defn}

\begin{thm}[Grothendieck trace formula]\label{thm:Grothendieck trace formula}
Let $\FF$ be a finite field of characteristic $p$ and let
$\Lambda$ be an adic ring such that $p$ is invertible in
$\Lambda$. Then
$$
L_\FF(\cmplx{\sheaf{F}},T)=\mathcal{L}_\FF(\cmplx{\sheaf{F}},T)
$$
for every scheme $X$ in $\cat{Sch}^{sep}_\FF$ and every complex
$\cmplx{\sheaf{F}}$ in $\cat{PDG}^{\cont}(X,\Lambda)$.
\end{thm}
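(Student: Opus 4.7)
The plan is to combine two successive reductions---one on the scheme $X$ and one on the coefficient ring $\Lambda$---with the classical Grothendieck trace formula and the vanishing of $\SKTh_1$ in the limit. Both $L_{\FF}(-,T)$ and $\mathcal{L}_{\FF}(-,T)$ are multiplicative on cofibre sequences (for $\mathcal{L}_{\FF}$ by Waldhausen additivity applied to $\RDer\Sectc$ and $\ringtransf_{\Lambda[[T]]}$; for $L_{\FF}$ from the same property of each Euler factor) and invariant under quasi-isomorphism, so each defines a group homomorphism $\KTh_0(\cat{PDG}^{\cont}(X,\Lambda))\to\KTh_1(\Lambda[[T]])$, and it suffices to check agreement on a convenient set of generators. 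Using the composition and base-change properties in Proposition~\ref{prop:properties of Rf_!}, together with the open-closed d\'evissage $j_!j^*\cto\id\qto i_*i^*$, one reduces by induction on $\dim X$ to the case where $X$ is a smooth geometrically connected curve and $\cmplx{\sheaf{F}}$ is locally constant constructible (the contribution of the closed complement in each step is a product of finitely many Euler factors that agrees on both sides by direct inspection).

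Next, I would reduce the coefficient ring. Since $\KTh_1(\Lambda[[T]])$ is profinite by Proposition~\ref{prop:K_1 of adic rings}, it suffices to verify the identity in each finite quotient, so $\Lambda$ may be assumed to be a finite ring in which $p$ is invertible, hence a finite $\Int_{\ell}$-algebra for some prime $\ell\neq p$. Lemma~\ref{lem:twisting lemma} then provides an isomorphism $\cmplx{\sheaf{F}}\isomorph\ringtransf_M(\Int_{\ell}[G]_X^\sharp)$ for a finite Galois cover $Y\to X$ with group $G=\Gal(L/K)$ over the function field $K$ of $X$, and a bimodule complex $\cmplx{M}$. By Proposition~\ref{prop:compatibility of change of rings} and the corresponding naturality for Euler factors, both sides of the desired equality commute with $\KTh_1(\ringtransf_M)$, so the theorem reduces to the universal case $\Lambda=\Int_{\ell}[G]$, $\cmplx{\sheaf{F}}=\Int_{\ell}[G]_X^\sharp$.

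After extending scalars to a maximal $\Int_{\ell}$-order $M$ in the split semisimple algebra $F[G]$, the ring $M[[T]]$ is a product of power series rings over local valuation rings, so $\KTh_1(M[[T]])$ is detected by determinants and the classical Grothendieck--Verdier trace formula applied componentwise yields the required identity in $\KTh_1(M[[T]])$. Consequently, the obstruction $L_{\FF}(\cmplx{\sheaf{F}},T)\cdot\mathcal{L}_{\FF}(\cmplx{\sheaf{F}},T)^{-1}$ lies in the kernel of $\KTh_1(\Int_{\ell}[G][[T]])\to\KTh_1(M[[T]])$, i.e., in $\SKTh_1(\Int_{\ell}[G][[T]])$, which by Proposition~\ref{prop:SKTh_1} equals $\SKTh_1(\Int_{\ell}[G])$. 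Enlarging $L$ within a separable closure of $K$, the sheaves $\Int_{\ell}[\Gal(L/K)]_X^\sharp$ form a compatible inverse system and the obstructions assemble into an element of $\varprojlim_L\SKTh_1(\Int_{\ell}[\Gal(L/K)])$, which vanishes by Proposition~\ref{prop:vanishing in the limit}.

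The main obstacle is the last passage to the limit: one has to check that the obstruction class depends functorially on the Galois cover, so that it genuinely represents an element of the inverse limit that Proposition~\ref{prop:vanishing in the limit} can kill. This requires verifying that the Euler factors, the functor $\RDer\Sectc$, and the change-of-ring functor $\ringtransf$ all behave naturally under pullback along the surjections $\Int_{\ell}[\Gal(L'/K)]\qto \Int_{\ell}[\Gal(L/K)]$ for $L\subset L'$, which should follow from the explicit descriptions given in Sections~\ref{sec:perf complexes} and~\ref{sec:L-functions}.
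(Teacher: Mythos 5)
Your proposal matches the paper's overall strategy: open--closed d\'evissage and induction on dimension to reach the smooth geometrically connected curve case, reduction of $\Lambda$ through finite $\Int_{\ell}$-algebras to $\Int_{\ell}[G]$ via Lemma~\ref{lem:twisting lemma} and Proposition~\ref{prop:compatibility of change of rings}, the classical trace formula over a maximal order to place the discrepancy in $\SKTh_1(\Int_{\ell}[G][[T]])\cong\SKTh_1(\Int_{\ell}[G])$, and finally the vanishing in the limit from Proposition~\ref{prop:vanishing in the limit}.

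However, the step you flag as the main obstacle is not settled by naturality of $\ringtransf$ and $\RDer\Sectc$ under the surjections $\Int_{\ell}[\Gal(L'/K)]\qto\Int_{\ell}[\Gal(L/K)]$. The sheaf $\Int_{\ell}[\Gal(L/K)]^\sharp_X$ exists on $X$ only when the cover corresponding to $L$ is \'etale over $X$; as $L$ grows inside a separable closure of $K$, you are forced to delete more and more ramification points, so the scheme $X$ cannot be held fixed while assembling the system $(d_L)_L$. The missing ingredient, which the paper makes explicit, is that the discrepancy $d_L = L(\Int_{\ell}[\Gal(L/K)]^\sharp_X,T)\,\mathcal{L}(\Int_{\ell}[\Gal(L/K)]^\sharp_X,T)^{-1}$ is \emph{unchanged} when $X$ is replaced by a smaller open subscheme: both the $L$-function and $\mathcal{L}$-function change by exactly the same finite product of Euler factors supported on the dimension-zero complement, which agree by direct inspection. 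Consequently $d_L$ is an invariant of the extension $L/K$ alone, well-defined for every finite Galois $L$, and only then does the compatible system genuinely define an element of $\varprojlim_L\SKTh_1(\Int_{\ell}[\Gal(L/K)])$ to which Proposition~\ref{prop:vanishing in the limit} applies. Without this shrinking-invariance the inverse limit has no candidate element to vanish, so this is the key point to supply, not merely a routine naturality check.
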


We proceed by a series of lemmas, following closely along the
lines of \cite[Chapter VI, \S 13]{Milne:EtCohom}.

\begin{lem}
Let $U$ be an open subscheme of $X$ with closed complement $Z$.
Theo\-rem~\ref{thm:Grothendieck trace formula} is true for $X$ if
it is true for $U$ and $Z$.
\end{lem}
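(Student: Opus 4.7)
The plan is to use the standard excision cofibre sequence in $\cat{PDG}^{\cont}(X,\Lambda)$ associated to the open-closed decomposition
$$
j\colon U\oimm X, \qquad i\colon Z\cimm X,
$$
together with the multiplicativity of both $L_\FF(-,T)$ and $\mathcal{L}_\FF(-,T)$ on exact sequences. First I would produce, for any $\cmplx{\sheaf{F}}\in\cat{PDG}^{\cont}(X,\Lambda)$, the short exact sequence
$$
j_!j^*\cmplx{\sheaf{F}} \cto \cmplx{\sheaf{F}} \qto i_*i^*\cmplx{\sheaf{F}}
$$
in $\cat{PDG}^{\cont}(X,\Lambda)$ (this is the classical excision sequence applied levelwise in the inverse system). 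Note that $j_!$ and $i_*=i_!$ preserve the category of $DG$-flat perfect complexes of adic sheaves, so all three terms lie in $\cat{PDG}^{\cont}(X,\Lambda)$.

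Next I would analyse both sides separately. For the $L$-function side, the closed points decompose as $X_0 = U_0 \sqcup Z_0$. For $x\in U_0$ the stalk of $i_*i^*\cmplx{\sheaf{F}}$ at any geometric point over $x$ vanishes, and the stalk of $j_!j^*\cmplx{\sheaf{F}}$ agrees with that of $\cmplx{\sheaf{F}}$; for $x\in Z_0$ the roles are reversed. Combined with the multiplicativity of $E_x(-,T)$ on exact sequences recalled in Section~\ref{sec:L-functions}, this yields
$$
L_\FF(\cmplx{\sheaf{F}},T) = L_\FF(j_!j^*\cmplx{\sheaf{F}},T)\cdot L_\FF(i_*i^*\cmplx{\sheaf{F}},T) = L_\FF(j^*\cmplx{\sheaf{F}},T)\cdot L_\FF(i^*\cmplx{\sheaf{F}},T),
$$
where the last equality identifies the outer $L$-functions with those computed on $U$ and $Z$ respectively (each Euler factor coincides with the one coming from the restriction).

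For the cohomological side, using Proposition~\ref{prop:properties of Rf_!} (functoriality of $\RDer f_!$ in $f$) the composites $h_X\comp j = h_U$ and $h_X\comp i = h_Z$ give canonical identifications
$$
\RDer\Sectc(\algc{X},j_!j^*\cmplx{\sheaf{F}})\isomorph\RDer\Sectc(\algc{U},j^*\cmplx{\sheaf{F}}),\qquad
\RDer\Sectc(\algc{X},i_*i^*\cmplx{\sheaf{F}})\isomorph\RDer\Sectc(\algc{Z},i^*\cmplx{\sheaf{F}})
$$
in $\KTh_1(\Lambda[[T]])$, compatible with the action of $\Frob_\FF$. Since $\RDer\Sectc(\algc{X},-)$ and $\ringtransf_{\Lambda[[T]]}$ are Waldhausen exact, the excision cofibre sequence above induces an exact sequence of complexes in $\cat{PDG}^{\cont}(\Lambda[[T]])$ intertwined by $\id-\Frob_\FF T$, so the relations in Proposition~\ref{prop:presentation of K_1} give
$$
\mathcal{L}_\FF(\cmplx{\sheaf{F}},T) = \mathcal{L}_U(j^*\cmplx{\sheaf{F}},T)\cdot\mathcal{L}_Z(i^*\cmplx{\sheaf{F}},T).
$$
Combining the two multiplicativities, the hypothesis that the trace formula holds on $U$ and on $Z$ implies it holds on $X$.

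The main obstacle, as far as I can see, is purely bookkeeping: one must verify that the excision sequence really lifts to a \emph{cofibre} sequence in the Waldhausen sense (i.e.\ with a degreewise injective first map, not merely a quasi-isomorphism to such), and that the identifications of $\RDer\Sectc$ with cohomology of $U$ and $Z$ are realized by actual Waldhausen exact quasi-isomorphisms compatible with the Frobenius action, so that the identity of classes in $\KTh_1(\Lambda[[T]])$ really follows from relations (2) and (3) of Proposition~\ref{prop:presentation of K_1}. Once one unwinds the definition of $\RDer f_!$ via Godement resolutions and chooses coherent compactifications for $h_X$, $h_U$, $h_Z$, this verification is routine.
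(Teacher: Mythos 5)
Your argument is correct and follows essentially the same route as the paper: excision cofibre sequence for $U\oimm X\mmic Z$, the obvious decomposition of closed points on the $L$-function side, and functoriality of $\RDer f_!$ plus multiplicativity in $\KTh$ on the cohomological side. The only cosmetic difference is that the paper applies $\RDer x_!$ first and records the resulting identity in $\KTh_0(\cat{PDG}^{\cont}(\Spec\FF,\Lambda))$, then pushes it through the homomorphism $[\cmplx{\sheaf{G}}]\mapsto[\id-\Frob_\FF T]^{-1}$ to $\KTh_1(\Lambda[[T]])$, rather than invoking the $\KTh_1$-presentation directly as you do; both routes are valid.
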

\begin{proof}
Write $j\colon U\oimm X$ and $i\colon Z\cimm X$ for the
corresponding immersions,
$$
u\colon U\mto \Spec \FF,\qquad x\colon X\mto \Spec \FF, \qquad
z\colon Z\mto \Spec \FF
$$
for the structure morphisms. Clearly,
$$
L(X,\cmplx{\sheaf{F}})=L(U,j^*\cmplx{\sheaf{F}})L(Z,i^*\cmplx{\sheaf{F}})
$$
On the other hand, we have an exact sequence
$$
\RDer x_!j_!j^*\cmplx{\sheaf{F}}\cto \RDer
x_!\cmplx{\sheaf{F}}\qto \RDer x_!i_*i^*\cmplx{\sheaf{F}}
$$
and (chains of) quasi-isomorphisms
$$
\RDer u_!j^*\cmplx{\sheaf{F}}\simeq \RDer
x_!j_!j^*\cmplx{\sheaf{F}}\qquad \RDer
z_!i^*\cmplx{\sheaf{F}}\simeq \RDer x_! i_*i^*\cmplx{\sheaf{F}}.
$$
Hence,
$$
[\RDer x_!\cmplx{\sheaf{F}}]=[\RDer u_!j^*\cmplx{\sheaf{F}}][\RDer
z_!i^*\cmplx{\sheaf{F}}]
$$
in $\KTh_0\cat{PDG}^{\cont}(\Spec \FF,\Lambda)$. The homomorphism
\begin{align*}
\KTh_0\cat{PDG}^{\cont}(\Spec \FF,\Lambda)&\mto K_1(\Lambda[[T]]),\\
[\cmplx{\sheaf{F}}]\mapsto [\ringtransf_{\Lambda[[T]]}(\Sect(\Spec
\algc{\FF},s^*\cmplx{\sheaf{F}}))&\xrightarrow{1-\Frob_\FF
T}\ringtransf_{\Lambda[[T]]}(\Sect(\Spec
\algc{\FF},s^*\cmplx{\sheaf{F}}))]^{-1}
\end{align*}
preserves this relation.
\end{proof}

Next, we prove that the formula is compatible with change of the
base field.

\begin{lem}\label{lem:compatibility with base field change}
Let $\FF'$ be a subfield of $\FF$ and $X$ a scheme in
$\cat{Sch}^{sep}_\FF$. Then
$$
\mathcal{L}_{\FF'}(\cmplx{\sheaf{F}},T)=\mathcal{L}_{\FF}(\cmplx{\sheaf{F}},T^{[\FF:\FF']}).
$$
\end{lem}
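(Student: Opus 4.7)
Set $d=[\FF:\FF']$. The plan is to reduce the statement to the same $\KTh_1$-computation that appears in the proof of Lem\-ma~\ref{lem:Formula for Euler factor}. The extra $d$-th power of $T$ will arise for exactly the same reason as in the passage from $\Frob_\FF$ to $\Frob_{k(x)}$ at a closed point: the relevant geometric object over $\Spec\algc{\FF}$, viewed through the $\FF'$-structure, splits into $d$ pieces that $\Frob_{\FF'}$ permutes cyclically, and only one full cycle yields the action of $\Frob_\FF=\Frob_{\FF'}^d$.

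The first step is to produce a natural isomorphism
$$
\RDer_{\FF'}\Sectc(\algc{X},\cmplx{\sheaf{F}})\isomorph\RDer_\FF\Sectc(\algc{X},\cmplx{\sheaf{F}})^{d}
$$
together with an explicit description of the induced $\Frob_{\FF'}$-action on the right-hand side. Factor the structure morphism of $X$ over $\FF'$ as $h'=s_0\comp h$, with $s_0\colon\Spec\FF\mto\Spec\FF'$ finite; then Pro\-po\-si\-tion~\ref{prop:properties of Rf_!}~(3), combined with the base-change part~(4) and the decomposition $\Spec\FF\times_{\Spec\FF'}\Spec\algc{\FF}\isomorph\bigsqcup_{i=1}^{d}\Spec\algc{\FF}$ (valid because $\FF/\FF'$ is separable), yields the displayed identification. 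Tracking the Galois action through the base-change isomorphism, $\Frob_{\FF'}$ is represented on the right-hand side by precisely the cyclic block matrix appearing in the proof of Lem\-ma~\ref{lem:Formula for Euler factor}, with $\Frob_{k(x)}$ replaced by $\Frob_\FF$ and $\deg(x)$ replaced by $d$.

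Once this identification is in place, the $\KTh_1$-argument from Lem\-ma~\ref{lem:Formula for Euler factor} carries over word for word. Multiplication by the lower-triangular matrix $A$ there, whose class in $\KTh_1(\Lambda[[T]])$ is trivial, transforms $\id-\Frob_{\FF'}T$ into a matrix with diagonal entries $\id,\dots,\id,\id-\Frob_\FF T^{d}$. Using the relations of Pro\-po\-si\-tion~\ref{prop:presentation of K_1}, one reads off
$$
\bigl[\id-\Frob_{\FF'}T:\ringtransf_{\Lambda[[T]]}(C^{d})\bigr]=\bigl[\id-\Frob_\FF T^{d}:\ringtransf_{\Lambda[[T]]}(C)\bigr]
$$
in $\KTh_1(\Lambda[[T]])$, where $C=\RDer_\FF\Sectc(\algc{X},\cmplx{\sheaf{F}})$. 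Inverting both sides gives $\mathcal{L}_{\FF'}(\cmplx{\sheaf{F}},T)=\mathcal{L}_\FF(\cmplx{\sheaf{F}},T^{d})$.

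The main obstacle is the first step: setting up the identification of $\RDer_{\FF'}\Sectc$ with a $d$-fold copy of $\RDer_\FF\Sectc$ so that the $\Frob_{\FF'}$-action takes the explicit cyclic form. Once that is done, the $\KTh_1$-computation is a direct transcription of the one already carried out in Lem\-ma~\ref{lem:Formula for Euler factor}.
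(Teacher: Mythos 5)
Your proposal is correct and follows essentially the same route as the paper's proof. You factor the structure morphism through $\Spec\FF\mto\Spec\FF'$, use the splitting $\Spec\FF\times_{\Spec\FF'}\Spec\algc{\FF}\isomorph\bigsqcup_{i=1}^{d}\Spec\algc{\FF}$ to decompose $\RDer_{\FF'}\Sectc$ into $d$ copies of $\RDer_\FF\Sectc$ with $\Frob_{\FF'}$ acting by a cyclic permutation block matrix, and then run the same triangular-matrix manipulation in $\KTh_1(\Lambda[[T]])$ as in the Euler-factor lemma; this is exactly what the paper does (there the finite morphism is called $r$, and the decomposition appears via the direct-sum identification of $\Sect(\Spec\algc{\FF},s^{*}r^*r_*\cmplx{\sheaf{G}})$).
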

\begin{proof}
Let $r\colon \Spec \FF\mto \Spec \FF'$ be the morphism induced by
the inclusion $\FF'\subset \FF$ and write
\begin{align*}
h\colon& X\times_{\Spec \FF}\Spec \algc{\FF} \mto X,&\qquad h'\colon& X\times_{\Spec \FF'}\Spec \algc{\FF}\mto X\\
s\colon& \Spec \algc{\FF}\mto \Spec \FF,&\qquad s'\colon& \Spec
\algc{\FF}\mto \Spec \FF'
\end{align*}
for the corresponding structure morphisms. For any
$\cmplx{\sheaf{F}}$ in $\cat{PDG}^{\cont}(X,\Lambda)$, the
complexes $ r_*\RDer h_!\cmplx{\sheaf{F}}$, $\RDer r_!\RDer
h_!\cmplx{\sheaf{F}}$, and $\RDer h'_!\cmplx{\sheaf{F}}$ in
$\cat{PDG}^{\cont}(\Spec \FF',\Lambda)$ are quasi-isomorphic.
Moreover, for any complex $\cmplx{\sheaf{G}}$ in
$\cat{PDG}^{\cont}(\Spec \FF,\Lambda)$, the following diagram is
commutative:
$$
\xymatrix@R+2pc{ \Sect(\Spec
\algc{\FF},s^{*}r^*r_*\cmplx{\sheaf{G}})\ar[r]^{\isomorph}\ar[d]^{\Frob_{\FF'}}&\bigoplus\limits_{k=1}^{[\FF:\FF']}\Sect(\Spec
\algc{\FF},s^{*}\cmplx{\sheaf{G}})\ar[d]^{\left(\begin{smallmatrix}
0&\hdots&0&\Frob_{\FF}\\
\id&0&\hdots&0\\
\vdots&\ddots&\ddots&\vdots\\
0&\hdots&\id&0
\end{smallmatrix}\right)}\\
\Sect(\Spec
\algc{\FF},s^{*}r^*r_*\cmplx{\sheaf{G}})\ar[r]^{\isomorph}&\bigoplus\limits_{k=1}^{[\FF:\FF']}\Sect(\Spec
\algc{\FF},s^{*}\cmplx{\sheaf{G}}) }
$$
As in the proof of Lem\-ma~\ref{lem:Formula for Euler factor} one
concludes
\begin{align*}
\big[\ringtransf_{\Lambda[[T]]}\big(\RDer_{\FF'}\Sectc(\algc{X},\cmplx{\sheaf{F}})\big)
\xrightarrow{\id-\Frob_{\FF'}T}
\ringtransf_{\Lambda[[T]]}\big(\RDer_{\FF'}\Sectc(\algc{X},\cmplx{\sheaf{F}})\big)\big]&=\\
\big[\ringtransf_{\Lambda[[T]]}\big(\Sect(\Spec
\algc{\FF},s^{*}r^*r_*\RDer h_!\cmplx{\sheaf{F}})\big)
\xrightarrow{\id-\Frob_{\FF'}T}\ringtransf_{\Lambda[[T]]}\big(\Sect(\Spec \algc{\FF},s^{*}r^*r_*\RDer h_!\cmplx{\sheaf{F}})\big)\big]&=\\
\big[\ringtransf_{\Lambda[[T]]}\big(\Sect(\Spec
\algc{\FF},s^{*}\RDer h_!\cmplx{\sheaf{F}})\big)
\xrightarrow{\id-\Frob_{\FF}T^{[\FF:\FF']}}\ringtransf_{\Lambda[[T]]}\big(\Sect(\Spec \algc{\FF},s^{*}\RDer h_!\cmplx{\sheaf{F}})\big)\big]&=\\
\big[\ringtransf_{\Lambda[[T]]}\big(\RDer_{\FF}\Sectc(\algc{X},\cmplx{\sheaf{F}})\big)
\xrightarrow{\id-\Frob_{\FF}
T^{[\FF:\FF']}}\ringtransf_{\Lambda[[T]]}\big(\RDer_{\FF}\Sectc(\algc{X},\cmplx{\sheaf{F}})\big)\big].
\end{align*}
\end{proof}

Clearly, Theo\-rem~\ref{thm:Grothendieck trace formula} is true
for schemes of dimension $0$. Next, we consider the case that $X$
is a curve.

\begin{lem}\label{lem:smooth curve case}
The formula in Theo\-rem~\ref{thm:Grothendieck trace formula} is
true for any smooth and geometrically connected curve $X$,
$\Lambda=\Int_{\ell}[G]$, and
$\cmplx{\sheaf{F}}=\Int_{\ell}[G]^{\sharp}_{X}$, where $\ell$ is a
prime different from the characteristic of $\FF$ and $G$ is the
Galois group of a finite Galois covering of $X$.
\end{lem}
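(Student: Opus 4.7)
The strategy is to reduce the question to the classical Grothendieck trace formula after base change to a maximal order, then to kill the remaining error by a compatibility argument that combines Propositions~\ref{prop:SKTh_1} and~\ref{prop:vanishing in the limit}.

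Set $\delta := L_\FF(\Int_{\ell}[G]_X^\sharp, T)\cdot\mathcal{L}_\FF(\Int_{\ell}[G]_X^\sharp, T)^{-1}\in\KTh_1(\Int_{\ell}[G][[T]])$. Choose a finite extension $F/\Rat_\ell$ such that $F[G]$ is split semisimple, and let $M$ be a maximal $\Int_\ell$-order in $F[G]$. Since $\ringtransf_M$ commutes with the Euler-factor construction (directly from the definitions) and with $\RDer\Sectc(\algc{X},-)$ by Proposition~\ref{prop:compatibility of change of rings}, its image in $\KTh_1(M[[T]])$ is the analogous ratio for $\ringtransf_M(\Int_{\ell}[G]_X^\sharp)$. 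Under the Morita decomposition $M\isomorph\prod_k\End_{\mathcal{O}_{F_k}}(\mathcal{O}_{F_k}^{s_k})$, this base-changed sheaf splits into a product of flat, constructible, lisse classical $\ell$-adic sheaves on the smooth curve $X$, for which the classical Grothendieck trace formula \cite[Ch.~VI,\,§13]{Milne:EtCohom} gives the identity in each commutative factor $\KTh_1(\mathcal{O}_{F_k}[[T]])\isomorph \mathcal{O}_{F_k}[[T]]^{\times}$. Therefore $\delta\in\SKTh_1(\Int_{\ell}[G][[T]])$.

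Write $K = \FF(X)$ and $L_0 := \FF(Y)$, so that $G = \Gal(L_0/K)$. For each finite Galois extension $L/K$ containing $L_0$, let $U_L\subseteq X$ denote the maximal open subscheme on which $L/K$ is unramified, and set $\delta_L := L_\FF(\Int_{\ell}[\Gal(L/K)]_{U_L}^\sharp, T)\cdot\mathcal{L}_\FF(\Int_{\ell}[\Gal(L/K)]_{U_L}^\sharp, T)^{-1}$. The argument of the previous paragraph, applied to the pair $(U_L,\Gal(L/K))$, places $\delta_L$ in $\SKTh_1(\Int_{\ell}[\Gal(L/K)][[T]])$. The first lemma of this section (open/closed devissage) together with the trivial validity of Theorem~\ref{thm:Grothendieck trace formula} on zero-dimensional schemes shows that the class $\delta_L$ is independent of the choice of $U_L$ within $X$ (i.e.\ unchanged upon replacing $U_L$ by any smaller dense open), and in particular that $\delta_{L_0}=\delta$. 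For $L\subseteq L'$, base change along the surjection $\Int_{\ell}[\Gal(L'/K)]\to\Int_{\ell}[\Gal(L/K)]$, combined with Proposition~\ref{prop:compatibility of change of rings} and the same devissage to pass between $U_{L'}$ and $U_L$, sends $\delta_{L'}$ to $\delta_L$.

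The family $(\delta_L)_L$ thus constitutes an element of $\varprojlim_L \SKTh_1(\Int_{\ell}[\Gal(L/K)][[T]])$, which vanishes by Proposition~\ref{prop:SKTh_1} applied to each $\Gal(L/K)$ and by Proposition~\ref{prop:vanishing in the limit} applied with $Q=K$. Consequently every $\delta_L=0$; in particular $\delta=\delta_{L_0}=0$, which is the asserted identity. The main technical obstacle is the construction of the compatible lifts $\delta_L$: because a general finite Galois extension $L/K$ ramifies over points of $X$, one is forced to work on the auxiliary open subschemes $U_L\subsetneq X$ and to check carefully that the open/closed devissage identifies the $\SKTh_1$-classes computed on $X$ and on $U_L$, by using that the correction term is supported on a zero-dimensional scheme and therefore trivial.
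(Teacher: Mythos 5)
Your proposal is correct and follows essentially the same path as the paper's own proof: define the discrepancy class, push it to $\KTh_1(M[[T]])$ for a maximal order $M$ where the classical trace formula kills it, conclude it lies in $\SKTh_1(\Int_\ell[G][[T]])$, assemble these classes into a compatible system over all finite Galois extensions of the function field (using the open/closed devissage and triviality in dimension $0$ to allow shrinking $X$), and apply Propositions~\ref{prop:SKTh_1} and~\ref{prop:vanishing in the limit} to conclude the limit, hence each term, vanishes. You spell out the $U_L$-bookkeeping and the Morita reduction more explicitly than the paper does, but the argument is the same.
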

\begin{proof}
Let $Q$ be the function field of $X$ and let $F$ the function
field of a finite Galois covering of $X$. Let $d_F$ denote the
element
$$
d_F=
L(\Int_{\ell}[\Gal(F/Q)]_X^{\sharp},T)\mathcal{L}(\Int_{\ell}[\Gal(F/Q)]_X^{\sharp},T)^{-1}
$$
in $\KTh_1(\Int_{\ell}[\Gal(F/Q)][[T]])$.

Note that $d_F$ does not change if we replace $X$ by an open
subscheme of $X$. Hence, we may define $d_F$ for any finite Galois
extension $F$ of $Q$. If $F'/F$ is Galois, then $d_{F'}$ is mapped
onto $d_F$ under the canonical homomorphism
$$
\KTh_1(\Int_{\ell}[\Gal(F'/Q)][[T]])\mto
\KTh_1(\Int_{\ell}[\Gal(F/Q)][[T]]).
$$
Let $L$ be a splitting field for $\Rat_{\ell}[\Gal(F/Q)]$ and
$M\subset L[\Gal(F/Q)]$ a maximal $\Int_{\ell}$-order. By the
classical Grothendieck trace formula \cite[Fonction $L$ mod
$\ell^n$, Theorem~2.2.(a)]{SGA4h}, the image of $d_F$ under the
homomorphism
$$
\KTh_1(\Int_{\ell}[\Gal(F/Q)][[T]])\mto
\KTh_1(M[[T]])\isomorph\bigoplus_{k=1}^r\mathcal{O}_L[[T]]^{\times}
$$
is trivial; hence $d_F\in
\SKTh_1(\Int_{\ell}[\Gal(F/Q)][[T]])=\SKTh_1(\Int_{\ell}[\Gal(F/Q)])$.
From Pro\-po\-si\-tion~\ref{prop:vanishing in the limit} we
conclude $d_F=0$.
\end{proof}

\begin{lem}\label{lem:dim1 case}
The formula in Theo\-rem~\ref{thm:Grothendieck trace formula} is
true for any scheme $X$ in $\cat{Sch}^{sep}_{\FF}$ of dimension
less or equal 1, any adic ring $\Lambda$ with $p\in
\Lambda^{\times}$ and any complex $\cmplx{\sheaf{F}}$ in
$\cat{PDG}^{\cont}(X,\Lambda)$.
\end{lem}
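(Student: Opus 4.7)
The plan is to reduce Lem\-ma~\ref{lem:dim1 case} to Lem\-ma~\ref{lem:smooth curve case} by a cascade of devissages in the spirit of Milne's proof of the classical Grothendieck trace formula. First, the zero-dimensional case is essentially by inspection: if $X$ is a finite disjoint union of closed points, then $\RDer\Sectc(\algc{X},\cmplx{\sheaf{F}})\simeq\bigoplus_{x}\cmplx{\sheaf{F}}_{x}$ with $\Frob_\FF$ acting diagonally, so $\mathcal{L}_\FF(\cmplx{\sheaf{F}},T)=\prod_x E_x(\cmplx{\sheaf{F}},T)=L_\FF(\cmplx{\sheaf{F}},T)$. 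Using the open-closed devissage lemma above and the fact that any $X\in\cat{Sch}^{sep}_\FF$ of dimension at most one is a smooth curve away from a finite set of closed points, we reduce to the case where $X$ is a smooth irreducible curve. By Lem\-ma~\ref{lem:compatibility with base field change} together with the remark $L_{\FF'}(\cmplx{\sheaf{F}},T)=L_\FF(\cmplx{\sheaf{F}},T^{[\FF:\FF']})$, we may further enlarge $\FF$ to the field of constants of $X$ and thus assume $X$ is smooth and geometrically connected.

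Next I reduce the coefficient ring and the sheaf. Since $\cat{PDG}^{\cont}(X,\Lambda)$ is built as an inverse system over $\openideals_\Lambda$ and $\KTh_1(\Lambda[[T]])$ is the profinite limit of $\KTh_1$ at the finite quotients of $\Lambda[[T]]$, it is enough to verify the identity for each $\cmplx{\sheaf{F}}_I\in\cat{PDG}(X,\Lambda/I)$ separately. Because $p\in\Lambda^{\times}$, the finite ring $\Lambda/I$ decomposes as a product of finite $\Int_\ell$-algebras for various primes $\ell\neq p$, and working componentwise we may assume $\Lambda$ itself is a finite $\Int_\ell$-algebra with $\ell\neq p$. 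A further application of the open-closed devissage allows us to restrict to an open subscheme on which $\cmplx{\sheaf{F}}$ is a bounded complex of flat, locally constant, constructible $\Lambda$-sheaves. The twisting lemma (Lem\-ma~\ref{lem:twisting lemma}) then furnishes a finite Galois covering of $X$ with group $G$ and a complex $\cmplx{M}\in\Int_\ell[G]^{\op}\text{-}\cat{SP}(\Lambda)$ with $\cmplx{\sheaf{F}}\isomorph\ringtransf_M(\Int_\ell[G]^\sharp_X)$. Pro\-po\-si\-tion~\ref{prop:compatibility of change of rings}, together with a stalk-wise naturality check that the Euler factor construction commutes with $\ringtransf_M$, shows that both $L_\FF(\cmplx{\sheaf{F}},T)$ and $\mathcal{L}_\FF(\cmplx{\sheaf{F}},T)$ are the images under $\KTh_1(\ringtransf_{M[[T]]})$ of the corresponding quantities for $\Int_\ell[G]^\sharp_X$. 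Since Lem\-ma~\ref{lem:smooth curve case} provides the desired identity on the $\Int_\ell[G]$-side, the formula follows.

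The main obstacle is the twisting step combined with the coefficient reduction: one must verify carefully that the Euler factor transforms correctly under $\ringtransf_M$ at each closed point and that the infinite product defining $L_\FF$ is mapped to the corresponding product on the $\Int_\ell[G]$-side in the profinite topology of $\KTh_1(\Lambda[[T]])$. A secondary subtlety is that the Galois covering and the bimodule $\cmplx{M}$ produced by the twisting lemma a~priori depend on the open ideal $I\in\openideals_\Lambda$, so one has to organise the reduction so that the identities for the individual $\cmplx{\sheaf{F}}_I$ assemble compatibly in the inverse limit defining $\KTh_1(\Lambda[[T]])$.
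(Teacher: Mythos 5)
Your argument is essentially the paper's: reduce via open--closed devissage to a smooth geometrically connected curve, reduce via Proposition~\ref{prop:K_1 of adic rings} to finite $\Int_\ell$-algebras with $\ell\neq p$, shrink so the complex is locally constant, apply Lemma~\ref{lem:twisting lemma} and Proposition~\ref{prop:compatibility of change of rings}, and conclude from Lemma~\ref{lem:smooth curve case}. The only difference is the ordering: the paper reduces coefficients to finite $\Int_\ell$-algebras \emph{first}, and only then performs all the geometric shrinking and twisting, whereas you interleave geometric and coefficient reductions. This makes the ``secondary subtlety'' you flag at the end a non-issue in the paper's organisation (once $\Lambda$ is finite, there is no inverse system of open ideals left to track), and in fact it is already a non-issue in yours by the time you invoke the twisting lemma, since you have reduced to finite $\Lambda$ before that step. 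So the worry about the Galois covering depending on $I$ is a red herring rather than a gap; the proof as written is sound.
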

\begin{proof}
By Pro\-po\-si\-tion~\ref{prop:K_1 of adic rings} it suffices to
consider finite rings $\Lambda$. The $\ell$-Sylow subgroups of
$\Lambda$ are subrings of $\Lambda$ and $\Lambda$ is equal to
their direct product. Since $p$ is invertible, the $p$-Sylow
subgroup is trivial. Hence, we may further assume that $\Lambda$
is a $\Int_{\ell}$-algebra for $\ell\neq p$.

Shrinking $X$ if necessary we may assume that $X$ is smooth,
irreducible curve and that $\cmplx{\sheaf{F}}$ is a strictly
perfect complex of locally constant sheaves. By replacing $\FF$
with its algebraic closure in the function field of $X$ and using
Lem\-ma~\ref{lem:compatibility with base field change}, we may
assume that $X$ is geometrically connected. By
Lem\-ma~\ref{lem:twisting lemma} and
Pro\-po\-si\-tion~\ref{prop:compatibility of change of rings} we
have
$$
\mathcal{L}(\cmplx{\sheaf{F}},T)=
\KTh_1(\ringtransf_{M\tensor_{\Int_{\ell}[G]}\Int_{\ell}[G][[T]]})(\mathcal{L}(\Int_{\ell}[G]^\sharp_X,T))
$$
for a suitable Galois group $G$ and a complex $\cmplx{M}$ in
$\Int_{\ell}[G]^{\op}$-$\cat{SP}(\Lambda)$. Likewise,
$$
L(\cmplx{\sheaf{F}},T)=\KTh_1(\ringtransf_{M\tensor_{\Int_{\ell}[G]}\Int_{\ell}[G][[T]]})(L(\Int_{\ell}[G]^\sharp_X,T)).
$$
Now the assertion follows from Lem\-ma~\ref{lem:smooth curve
case}.
\end{proof}

We complete the proof of Theo\-rem~\ref{thm:Grothendieck trace
formula} by induction on the dimension $d$ of $X$. By shrinking
$X$ if necessary we may assume that there exists a morphism
$f\colon X\mto Y$ such that $Y$ and all fibres of $f$ have
dimension less than $d$. Then
Pro\-po\-si\-tion~\ref{prop:properties of Rf_!}.(3) and the
induction hypothesis imply
\begin{align*}
\mathcal{L}(\cmplx{\sheaf{F}},T)=\mathcal{L}(\RDer
f_!\cmplx{\sheaf{F}},T) =L(\RDer f_!\cmplx{\sheaf{F}},T).
\end{align*}
Let now $y$ be a closed point of $Y$. Write $f_y\colon X_y\mto X$
for the fibre over $y$. Then
\begin{align*}
E_y(\RDer
f_!\cmplx{\sheaf{F}},T)&=\big[\ringtransf_{\Lambda[[T]]}\big(\RDer\Sectc(\algc{X}_y,f_y^*\cmplx{\sheaf{F}})\big)
\xrightarrow{\id-\Frob_{\FF}
T}\ringtransf_{\Lambda[[T]]}\big(\RDer\Sectc(\algc{X}_y,f_y^*\cmplx{\sheaf{F}})\big)\big]^{-1}\\
&=L(f_y^*\cmplx{\sheaf{F}},T)
\end{align*}
by Pro\-po\-si\-tion~\ref{prop:properties of Rf_!}.(4) and the
induction hypothesis. Since clearly
$$
L(\cmplx{\sheaf{F}},T)=\prod_{y\in
Y_0}L(f_y^*\cmplx{\sheaf{F}},T),
$$
Theo\-rem~\ref{thm:Grothendieck trace formula} follows.

\begin{rem}
The formula in Theo\-rem~\ref{thm:Grothendieck trace formula} is
also valid if $\Lambda$ is a finite field of characteristic $p$,
see \cite[Fonction $L$ mod $\ell^n$, Theorem~2.2.(b)]{SGA4h}.
However, it does not extend to general adic $\Int_p$-algebras. We
refer to \emph{loc.\,cit.}, $\S4.5$ for a counterexample.
\end{rem}

\bibliographystyle{amsalpha}
\bibliography{Literature}

\providecommand{\bysame}{\leavevmode\hbox to3em{\hrulefill}\thinspace}
\providecommand{\MR}{\relax\ifhmode\unskip\space\fi MR }
\providecommand{\MRhref}[2]{%
  \href{http://www.ams.org/mathscinet-getitem?mr=#1}{#2}
}
\providecommand{\href}[2]{#2}
\begin{thebibliography}{NSW00}

\bibitem[AGV72]{SGA4-3}
M.~Artin, A.~Grothendieck, and J.L. Verdier, \emph{Th\'eorie des topos et
  cohomologie \'etale des sch\'emas ({SGA} 4-3)}, Lecture Notes in Mathematics,
  no. 305, Springer, Berlin, 1972.

\bibitem[Del77]{SGA4h}
P.~Deligne, \emph{Cohomologie \'etale ({SGA} 4$\frac{1}{2}$)}, Lecture Notes in
  Mathematics, no. 569, Springer, Berlin, 1977.

\bibitem[FK06]{FK:CNCIT}
T.~Fukaya and K.~Kato, \emph{A formulation of conjectures on {$p$}-adic zeta
  functions in non-commutative {I}wasawa theory}, Proceedings of the St.
  Petersburg Mathematical Society (Providence, RI), vol. XII, Amer. Math. Soc.
  Transl. Ser. 2, no. 219, American Math. Soc., 2006, pp.~1--85.

\bibitem[Gro77]{SGA5}
A.~Grothendieck, \emph{Cohomologie $\ell$-adique et fonctions {$L$} ({SGA} 5)},
  Lecture Notes in Mathematics, no. 589, Springer, Berlin, 1977.

\bibitem[Lam91]{Lam:FirstCourseNoncomRings}
T.~Y. Lam, \emph{A first course in noncommutative rings}, Graduate Texts in
  Mathematics, no. 131, Springer, Berlin, 1991.

\bibitem[Mil80]{Milne:EtCohom}
J.~S. Milne, \emph{Etale cohomology}, Princeton Mathematical Series, no.~33,
  Princeton University Press, New Jersey, 1980.

\bibitem[MT07]{MT:1TWKTS}
F.~Muro and A.~Tonks, \emph{The {1}-type of a {W}aldhausen {K}-theory
  spectrum}, Advances in Mathematics \textbf{216} (2007), no.~1, 178--211.

\bibitem[MT08]{MT:OnK1WaldCat}
\bysame, \emph{On {$K_1$} of a {W}aldhausen category}, {$K$}-theory and
  noncommutative geometry, EMS Series of Congress Reports, 2008, pp.~91--116.

\bibitem[NSW00]{NSW:CohomNumFields}
J.~Neukirch, A.~Schmidt, and K.~Wingberg, \emph{Cohomology of number fields},
  Grundlehren der mathematischen Wissenschaften, no. 323, Springer Verlag,
  Berlin Heidelberg, 2000.

\bibitem[Oli88]{Oliver:WhiteheadGroups}
R.~Oliver, \emph{Whitehead groups of finite groups}, London Mathematical
  Society lecture notes series, no. 132, Cambridge University Press, Cambridge,
  1988.

\bibitem[Sch79]{Schn:Galoiskohomologiegruppen}
P.~Schneider, \emph{{\"U}ber gewisse {G}aloiscohomologiegruppen}, Math. Z.
  \textbf{260} (1979), 181--205.

\bibitem[Sch02]{Schlichting:NoteOnKTheory}
M.~Schlichting, \emph{A note on {$K$}-theory and triangulated categories},
  Invent. Math. \textbf{150} (2002), no.~1, 111--116.

\bibitem[TT90]{ThTr:HAKTS+DC}
R.~W. Thomason and T.~Trobaugh, \emph{Higher algebraic {$K$}-theory of schemes
  and derived categories}, The Grothendieck Festschrift, vol. III, Progr.
  Math., no.~88, Birkh{\"a}user, 1990, pp.~247--435.

\bibitem[Wal85]{Wal:AlgKTheo}
F.~Waldhausen, \emph{Algebraic {$K$}-theory of spaces}, Algebraic and Geometric
  Topology (Berlin Heidelberg), Lecture Notes in Mathematics, no. 1126,
  Springer, 1985, pp.~318--419.

\bibitem[Wit08]{Witte:PhD}
M.~Witte, \emph{Noncommutative {I}wasawa main conjectures for varieties over
  finite fields}, Ph.D. thesis, Universit{\"a}t Leipzig, 2008, available at
  {\href{http://www.dart-europe.eu/full.php?id=162794}{\texttt{http://www.dart%
-europe.eu/full.php?id=162794}}}.

\end{thebibliography}
\end{document}